\documentclass[pdflatex,sn-mathphys-num]{sn-jnl}
\usepackage{graphicx}%
\usepackage{multirow}%
\usepackage{amsmath,amssymb,amsfonts}%
\usepackage{amsthm}%
\usepackage{mathrsfs}%
\usepackage[title]{appendix}%
\usepackage{xcolor}%
\usepackage{textcomp}%
\usepackage{manyfoot}%
\usepackage{booktabs}%
\usepackage{algorithm}%
\usepackage{algorithmicx}%
\usepackage{algpseudocode}%
\usepackage{listings}%

\newcommand{\R}{\mathbb{R}}

\theoremstyle{thmstyleone}%
\newtheorem{theorem}{Theorem}%
\newtheorem{proposition}[theorem]{Proposition}%
\newtheorem{lemma}[theorem]{Lemma}
\newtheorem{corollary}[theorem]{Corollary}

\theoremstyle{thmstyletwo}%
\newtheorem{remark}{Remark}%

\theoremstyle{thmstylethree}%

\begin{document}

\title[$L^2$-supercritical Kirchhoff equation with partial confinement]{Multiplicity of normalized solutions for the $L^2$-supercritical Kirchhoff equation with partial confinement}


\author[1]{\fnm{Qun} \sur{Wang}}\email{wangq199@nenu.edu.cn}

\author*[2]{\fnm{Xiaojun} \sur{Chang}}\email{changxj100@nenu.edu.cn}

\affil[1]{\orgdiv{School of Mathematics and Statistics}, \orgname{Northeast Normal University}, \orgaddress{\city{Changchun}, \postcode{130024}, \state{Jilin}, \country{PR China}}}

\affil*[2]{\orgdiv{School of Mathematics and Statistics \& Center for Mathematics and Interdisciplinary Sciences}, \orgname{Northeast Normal University}, \orgaddress{\city{Changchun}, \postcode{130024}, \state{Jilin}, \country{PR China}}}

\abstract{We study the existence of positive normalized solutions for the nonlocal Kirchhoff equation with partial harmonic confinement
\[-\left(a+b\int_{\mathbb{R}^{3}}|\nabla u|^{2}\,dx\right)\Delta u
+(x_1^2+x_2^2)u+\lambda u=|u|^{p-2}u,\]
subject to the mass constraint $\int_{\mathbb{R}^{3}}|u|^2\,dx=\mu$,
where $a,b>0$ are constants, $\frac{14}{3}<p<6$, $\mu>0$ is a prescribed mass, and $\lambda\in\mathbb{R}$ is a Lagrange multiplier associated with the mass constraint. For every sufficiently small $\mu>0$, we prove the existence of two positive normalized solutions. One of these solutions is a local minimizer, while the other is obtained at a  mountain-pass level. The construction of the local minimizer relies on a combination of profile decomposition and a new nonlocal merging technique, yielding compactness up to translations in the unconfined $x_3$-direction. The mountain-pass solution is obtained by considering positive solutions defined on expanding balls. A uniform bound on the Morse index allows us to obtain a spectral estimate for the limiting Lagrange multiplier. Together with an energy comparison, this spectral estimate yields strong convergence in the energy space and recovers both the prescribed mass
and the Kirchhoff coefficient.}

\keywords{Normalized solutions, nonlinear Kirchhoff equations, $L^2$-supercritical, partial confinement, variational methods}


\pacs[MSC Classification]{35J60, 47J30}

\maketitle

\section{Introduction and main result}\label{sec1}

In this paper, we study the Kirchhoff equation with partial
harmonic confinement
\begin{equation}\label{1}
-\left(a+b\int_{\R^3}|\nabla u|^2dx\right)\Delta u+(x_1^2+x_2^2)u+\lambda u=|u|^{p-2}u
\end{equation}
subject to the mass constraint
\begin{equation}\label{constraint}
\int_{\R^3}|u|^2dx=\mu,
\end{equation}
where $a,b>0$, $\frac{14}{3}<p<6$, and $\mu>0$ is the prescribed mass.
The parameter $\lambda\in\mathbb R$ arises as a Lagrange
multiplier associated with the mass constraint.

The nonlocal principal part in \eqref{1} is motivated by
Kirchhoff's model \cite{Kirchhoff-1883} for the transverse
vibrations of an elastic string:
\[
\rho\frac{\partial^2u}{\partial t^2}
-\bigg(\frac{P_0}{h}+\frac{E}{2L}\int_{0}^{L}\bigg|\frac{\partial u}{\partial x}\bigg|^2dx\bigg)
\frac{\partial^2u}{\partial x^2}=0.
\]
Here, $\rho, P_0, h, E$ and $L$ denote the mass density, initial tension,
cross-sectional area, Young's modulus, and 
the length of the string, respectively.
The mathematical analysis of Kirchhoff-type equations was further developed by Pohozaev \cite{Pohozaev-1975} and Lions \cite{Lions-1978}. Results on well-posedness and global solvability for the associated
evolution equations were obtained in
\cite{Arosio-1996,D'Ancona-1992}.

For stationary Kirchhoff equations, existence results on bounded
domains were obtained in \cite{LLS-2014,Perera-2006}. Problems in the whole space with potentials were 
studied in
\cite{Deng-2015,Figueiredo-2014,He-2012,Li-2020}, with results on
existence, multiplicity, concentration and asymptotic behavior.
In \eqref{1}, the coefficient of $-\Delta u$ depends on the total
kinetic energy of the solution. This dependence gives rise to
the quartic kinetic term in the corresponding energy functional.

We seek normalized solutions of \eqref{1}--\eqref{constraint}
as critical points of the energy functional
\[
J(u):=
\frac{a}{2}\int_{\mathbb R^3}|\nabla u|^2\,dx
+\frac{b}{4}\left(\int_{\mathbb R^3}|\nabla u|^2\,dx\right)^2
+\frac{1}{2}\int_{\mathbb R^3}(x_1^2+x_2^2)|u|^2\,dx
-\frac{1}{p}\int_{\mathbb R^3}|u|^p\,dx
\]
restricted to the mass sphere
\[
S_\mu:=\left\{u\in H:
\int_{\mathbb R^3}|u|^2\,dx=\mu\right\},
\]
where
\[
H:=\left\{u\in H^1(\mathbb R^3,\mathbb R):
\int_{\mathbb R^3}(x_1^2+x_2^2)|u|^2\,dx<\infty\right\}
\]
is equipped with the inner product
\[
(u,v)_H:=\int_{\mathbb R^3}
\left(\nabla u\cdot\nabla v+(x_1^2+x_2^2)uv+uv\right)\,dx
\]
and the induced norm
\[
\|u\|_H^2:=\int_{\mathbb R^3}
\left(|\nabla u|^2+(x_1^2+x_2^2)|u|^2+|u|^2\right)\,dx.
\]

The range $\frac{14}{3}<p<6$ is $L^2$-supercritical for the
Kirchhoff energy and Sobolev subcritical in $\mathbb{R}^3$. For $t>0$ and $u\in H$, the dilation
$(t\star u)(x):=t^{\frac32}u(tx)$ preserves the $L^2$ norm.
Under this dilation, the kinetic terms scale as $t^2$ and $t^4$,
the potential term as $t^{-2}$, and the nonlinear term as
$t^{\frac{3(p-2)}{2}}$.
Since $\frac{3(p-2)}2>4$, we have $J(t\star u)\to-\infty$ as
$t\to\infty$.
Thus, $J$ is unbounded from below on $S_\mu$ for every $\mu>0$,
and there is no global constrained minimizer in this range.

For Kirchhoff equations in the absence of an external potential,
Ye \cite{Ye-2015} studied the sharp existence of global constrained minimizers
in the $L^2$-subcritical regime and obtained mountain-pass solutions
in the $L^2$-supercritical regime for $1\leq N\leq3$.
Qi and Zou \cite{Qi-2022}  subsequently classified the positive normalized
solutions for pure-power nonlinearities. Further existence results were obtained for general
$L^2$-supercritical nonlinearities in \cite{He-2023}
and for combined nonlinearities involving a Sobolev critical
term in \cite{Chen-2024}.

In the presence of trapping potentials satisfying $V(x)\to+\infty$ as
$|x|\to\infty$, the existence and asymptotic behavior of
normalized solutions for Kirchhoff equations were studied in
\cite{Guo-2018,Li-2019,Hu-2021}.
For suitable potentials satisfying $V(x)\to0$ as $|x|\to\infty$,
Cai and Zhang \cite{Cai-2023} obtained normalized solutions
in $\mathbb R^3$ for $\frac{14}{3}<p<6$, while
He, Lv and Tang \cite{He-JGA-2023} treated combined
$L^2$-supercritical and Sobolev critical nonlinearities.

The present work is motivated by the corresponding semilinear
problem with partial harmonic confinement.
 Bellazzini, Boussa\"id, Jeanjean and
Visciglia \cite{Bellazzini-2017} studied the nonlinear Schr\"odinger equation
\begin{equation*}\label{jeanjean}
-\Delta u+(x_1^2+x_2^2)u+\lambda u=|u|^{p-2}u
\quad\text{in }\mathbb R^3,
\end{equation*}
subject to the mass constraint \eqref{constraint} in the $L^2$-supercritical range
$\frac{10}{3}<p<6$. Using the concentration-compactness principle of Lions \cite{Lions-1984-1,Lions-1984-2}, they established the existence of a positive local
minimizer for sufficiently small mass and studied its stability
and qualitative properties. Their work inspired a number of
studies on various partially confined nonlinear models, including the Hartree--Fock system \cite{Jia-2021},
nonlinear Schr\"odinger systems \cite{Jia-2020,Wang-2019}, and
inhomogeneous Gross--Pitaevskii equations \cite{LiuHeFeng}.

Bellazzini et al.\ also raised the question of the existence of a second
solution at a mountain-pass level. Wei and Wu \cite{Wei-2023}
gave a positive answer for sufficiently small mass through a reduction
argument. Jeanjean and Song \cite{Jeanjean-2025} subsequently developed
a non-perturbative variational approach based on solutions
on expanding balls; see also \cite{BQZ-2024} for related
constructions.
Their proof combines uniform bounds derived from the Pohozaev
identity, compactness arguments based on symmetry and
monotonicity, and spectral information on the limiting
Lagrange multiplier.

Motivated by these results, we investigate whether a local
minimizer and a mountain-pass solution coexist for the
normalized Kirchhoff problem \eqref{1}-\eqref{constraint}.  As in the semilinear case, the loss of compactness still originates from translations in the unconfined 
 $x_3$-direction. However, the diffusion coefficient now depends on the total kinetic energy, which introduces additional difficulties
in both variational constructions.

For convenience, write $x_\perp=(x_1,x_2)$ and set
\[
A(u):=\int_{\mathbb R^3}|\nabla u|^2\,dx,\qquad
C(u):=\int_{\mathbb R^3}|x_\perp|^2|u|^2\,dx,\qquad
P(u):=\int_{\mathbb R^3}|u|^p\,dx.
\]
Then
\[
J(u)=\frac a2A(u)+\frac b4A(u)^2
+\frac12C(u)-\frac1pP(u).
\]

For the local minimizer, the compactness argument in the
semilinear case relies on a strict comparison of constrained
minimum levels at different masses.
To explain the difficulty in the Kirchhoff case, let
$u\in S_\mu$ and $\theta>1$.
Then $\sqrt{\theta}\,u\in S_{\theta\mu}$ and
\begin{equation*}\label{eq:intro-amplitude}
J(\sqrt{\theta}\,u)-\theta J(u)
=
\frac b4(\theta^2-\theta)A(u)^2
-\frac{\theta^{\frac{p}{2}}-\theta}{p}P(u).
\end{equation*}
When $b=0$, the right-hand side is negative, which is a
key ingredient in the comparison of the semilinear
minimum levels; see
\cite[Lemma~3.5]{Bellazzini-2017}.
For $b>0$, the additional positive term means that
the same scaling argument does not directly yield
the required comparison.
We overcome this difficulty by combining profile decomposition
with a nonlocal merging argument that rules out splitting into
multiple nonzero profiles.

For the mountain-pass solution, we adapt the approximation
on expanding balls developed in \cite{Jeanjean-2025}.
The main issue is to pass to the limit while recovering both
the prescribed mass and the Kirchhoff coefficient. The spectral threshold entering the compactness argument
now depends on the a priori unknown limit of the kinetic
energy.
We obtain the required spectral estimate from the Morse
index bound supplied by the constrained minimax construction
and combine it with an energy comparison to establish
strong convergence.

Our main result is the following.
\begin{theorem}\label{thm:main}
Let $a,b>0$ and $\frac{14}{3}<p<6$.
Then there exists $\mu_*>0$ such that, for every $\mu\in(0,\mu_*)$, the constrained functional $J|_{S_\mu}$ admits at least two positive critical points $u_1, u_2\in S_\mu$, where $u_1$ is a local minimizer and $u_2$ is a critical point at a mountain-pass level.  Moreover,
\[
                0<J(u_1)<d_\mu<J(u_2),
\]
where
\begin{eqnarray}\label{eq:vanishing-threshold}
 d_\mu:=\min_{s>0}
      \left(\frac a2s+\frac b4s^2+\frac{\mu^2}{2s}\right).
\end{eqnarray}
\end{theorem}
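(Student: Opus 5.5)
The proof splits into two independent constructions. Throughout, $\mu_*$ is chosen small, and the only structural input is the lower bound $\frac12 C(u)\ge \frac12\cdot$ (something like) $\mu^2/A(u)$. This follows from the uncertainty-type inequality $\int|\nabla_\perp u|^2\int|x_\perp|^2u^2\ge(\int u^2)^2$ on $\mathbb R^2$, integrated in $x_3$ via Cauchy--Schwarz. Hence
\[
\frac a2A(u)+\frac b4A(u)^2+\frac12C(u)\ \ge\ \frac a2A+\frac b4A^2+\frac{\mu^2}{2A}\ \ge d_\mu ,
\]
and the minimizing $s$ in \eqref{eq:vanishing-threshold} satisfies $s\sim \mu$ as $\mu\to0$. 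Gagliardo--Nirenberg gives $P(u)\le C_p\mu^{\frac{6-p}{4}}A^{\frac{3(p-2)}{4}}$, with exponent $\frac{3(p-2)}4>2$.

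\textbf{Local minimizer.} Set $B_k:=\{u\in S_\mu: A(u)\le k\mu\}$ for a suitable fixed $k>1$ (tuned to the constant in the uncertainty bound). For small $\mu$, the nonlinear term is $o(\mu)$ on $B_k$, while $\inf_{\partial B_k}J$ is at least $(\frac a2 k)\mu$ and hence strictly larger than $m_\mu:=\inf_{B_k}J$. Testing with $u=\phi_0(x_\perp)\,\psi(x_3)$, where $\phi_0$ is the 2D harmonic ground state and $\psi$ is spread out, gives $m_\mu\le \mu+o(\mu)$-type values. I will aim to show $0<m_\mu<d_\mu$; here $d_\mu$ must be compared carefully, and that is why the nonlinear term must give the strict inequality below $d_\mu$, from the negative $-\frac1pP$ contribution. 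Positivity $m_\mu>0$ follows from the lower bound above.

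Compactness of minimizing sequences up to $x_3$-translations is the main obstacle. I apply a profile decomposition in $x_3$ (the $x_\perp$ directions are compact thanks to the confinement): $u_n=\sum_j w^j(\cdot-z_n^j e_3)+r_n$. Vanishing is excluded because it forces $P\to0$ and then $J\ge d_\mu> m_\mu$. Dichotomy with several nonzero profiles is excluded by the nonlocal merging idea. The quadratic terms, $C$, and $P$ split additively, but $A^2$ is superadditive: $(\sum A_j)^2\ge\sum A_j^2$. So I merge profiles by replacing $\{w^j\}$ with the single function $w=(\sum|w^j|^2)^{1/2}$, or by translating the profiles to overlap. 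This keeps the mass, does not increase $A$ and $C$, and strictly increases $P$. That strict gain contradicts minimality, so the sequence has a single profile with full mass, and strong convergence follows. Replacing $u$ by $|u|$ and applying the strong maximum principle gives a positive $u_1$.

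\textbf{Mountain pass.} Following the expanding-ball approach of Jeanjean--Song, on $B_R\subset\mathbb R^3$ I define a mountain-pass level $c_R$ between $B_k$ and the set $\{J<m_\mu\}$ far out along the fibers $t\star u$. Positive critical points $u_R$ with Lagrange multiplier $\lambda_R$ and Morse index $\le 2$ are obtained via a monotonicity trick with bounded Palais--Smale sequences, as in that construction. Uniform bounds come from the Pohozaev identity together with $c_R\to c_\mu$ and $c_\mu>d_\mu$. The latter holds because the mountain-pass paths must cross $\partial B_k$, where $J\ge \frac a2k\mu>d_\mu$ for a suitable $k$. As $R\to\infty$, $u_R\rightharpoonup u$ weakly, $A(u_R)\to A_\infty\ge A(u)$, and $u$ solves the equation with coefficient $a+bA_\infty$. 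The Morse bound yields the spectral estimate $\lambda_\infty>-(a+bA_\infty)\cdot2$, the harmonic threshold scaled by the kinetic coefficient (a negative multiplier below it would produce too many negative directions). Under this estimate, any mass lost to translations in $x_3$ would be a solution of the limiting problem carrying energy at least $d_\mu$-type. Comparing energies with $c_\mu<d_\mu+m_\mu$-type bounds then forces both $\|u\|_2^2=\mu$ and $A(u)=A_\infty$, so the convergence is strong and $u_2$ is a positive critical point with $J(u_2)=c_\mu>d_\mu$.

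I expect the hardest steps to be the merging strictness for the local minimizer and this final energy comparison that recovers $A_\infty=A(u)$.
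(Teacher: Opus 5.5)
You have the paper's architecture: a local set on which $P=o(\mu)$, the vanishing level $d_\mu$ from the transverse uncertainty inequality, a profile decomposition in $x_3$ with the merged function $W=(\sum_j (U^j)^2)^{1/2}$, and expanding balls with a Morse index bound. Using $\partial B_k$ instead of $\partial\mathcal G_\mu$ as the barrier that puts the mountain-pass level above $d_\mu$ is fine. Both compactness steps, however, have genuine gaps.

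\textbf{Local minimizer.} Ruling out total vanishing and merging several nonzero profiles does not give ``full mass''.
\begin{itemize}
\item A minimizing sequence may consist of nonzero profiles plus a remainder with $\|r_n\|_p\to0$ but $\liminf_n\|r_n\|_2>0$, i.e.\ mass dispersing in $x_3$. Then $\|W\|_2^2<\mu$, so $W\notin S_\mu$.
\item Because of the Kirchhoff term you cannot split off the remainder's energy additively. Nor can you restore the mass by $W\mapsto\sqrt\theta\,W$, since $J(\sqrt\theta W)-\theta J(W)$ contains the positive term $\frac b4(\theta^2-\theta)A(W)^2$.
\item The strict inequality $P(W)>\sum_jP(U^j)$ needs the profiles to overlap on a set of positive measure. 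Translations in $x_3$ alone cannot enforce this for arbitrary nonnegative profiles, which may be separated in $x_\perp$.
\end{itemize}
The missing ingredient, which fixes both problems, is to work with a Palais--Smale minimizing sequence: apply Ekeland's principle on the local set and arrange $\|u_n^-\|_H\to0$. Then every nonzero profile solves $-M_\infty\Delta U+|x_\perp|^2U+\lambda_\infty U=U^{p-1}$ with the frozen coefficient $M_\infty=a+b\lim_n A(u_n)$, and so it is positive. Projecting onto the transverse Gaussian gives the strict gap $\lambda_\infty>-2\sqrt{M_\infty}$ (Lemma~\ref{lem:strict-edge}). The PS relation then yields $(2\sqrt{M_\infty}+\lambda_\infty)\|r_n^\ell\|_2^2\le P(r_n^\ell)+o_n(1)$. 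Hence the remainder vanishes in $H$, and the profiles carry all of $\mu$, $K_\infty$ and $C_\infty$ before you merge.

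\textbf{Mountain pass.} Your spectral threshold is wrong.
\begin{itemize}
\item The bottom of the spectrum of $-M\Delta+|x_\perp|^2$ is $2\sqrt M$, not $2M$.
\item The Morse bound only gives the non-strict estimate $\lambda_\infty\ge-2\sqrt{M_\infty}$. Strictness comes later, from positivity of a nonzero limit.
\item The augmented Hessian contains the positive rank-one term $2b(\int\nabla u_n\cdot\nabla\varphi)^2$. You must remove it by restricting to a kernel, which costs one test direction.
\end{itemize}
More seriously, your planned energy comparison cannot work. For small $\mu$, $B_k\subset\mathcal G_\mu$, and the far endpoint on the fibre lies outside $\mathcal G_\mu$. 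So every admissible path crosses $\partial\mathcal G_\mu$, where $J\ge c\,\mu^{-\frac{6-p}{3p-10}}$. Your mountain-pass level therefore tends to $+\infty$ as $\mu\to0$, whereas $d_\mu$ and the local minimum level are $O(\mu)$. Any bound of the type ``mountain-pass level $<d_\mu+{}$local minimum level'' is false.

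Also, with evenness and monotonicity in $|x_3|$ nothing escapes by translation. The only possible loss is dispersion, and the partial-symmetry compactness lemma already gives $u_n\to u$ in $L^p$.

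The correct mechanism runs as follows.
\begin{itemize}
\item Suppose $u=0$. Then $M_\infty K_\infty+C_\infty+\lambda_\infty\mu=0$, $\lambda_\infty\ge-2\sqrt{M_\infty}$ and $K_\infty C_\infty\ge\mu^2$ force equality throughout. Hence $K_\infty=s_\mu$ and the limit level is exactly $d_\mu$, a contradiction.
\item So $u>0$ and the gap becomes strict.
\item Then $w_n=u_n-u$ satisfies $M_\infty A(w_n)+C(w_n)+\lambda_\infty\|w_n\|_2^2=o(1)$, so $w_n\to0$ in $H$. This recovers both $\|u\|_2^2=\mu$ and $A(u)=K_\infty$.
\end{itemize}
No energy splitting is needed.
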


The level $d_\mu$ provides a common reference for the two
variational constructions. We prove that
\[
d_\mu=\inf_{u\in S_\mu}J_0(u),
\qquad
J_0(u):=\frac a2A(u)+\frac b4A(u)^2+\frac12C(u),
\]
and that this infimum is not attained. In particular,
every sequence $\{v_n\}\subset S_\mu$ satisfying
$P(v_n)\to0$ has
$\liminf\limits_{n\to\infty}J(v_n)\ge d_\mu$.
When $b=0$, this threshold reduces to
$d_\mu=\sqrt a\,\mu$.

We construct the local minimizer without imposing symmetry. For a fixed $\chi>0$, set
\begin{align*}
&\mathfrak q_a(u):=aA(u)+C(u),\\
&c_\mu:=\inf\{J(u):u\in S_\mu,\ \mathfrak q_a(u)<\chi\}.
\end{align*}
For small mass, we show that $c_\mu<d_\mu$, which excludes
vanishing. We apply a profile decomposition \cite{HmidiKeraani}
 to a suitable minimizing
Palais--Smale sequence and fix the Kirchhoff coefficient
at the limit of the total kinetic energy, as in \cite{Li-2014,QiuOuLv2024}. The nonzero profiles separating in the $x_3$-direction are positive solutions of a common
limiting equation that retains the transverse harmonic
potential.
Their positivity yields a strict spectral gap, which
allows us to eliminate the remainder and recover the
prescribed mass and the total kinetic and potential
energies. Denoting these profiles by $\{U^j\}_{j\ge1}$, we consider
the merged function
\[
W:=\left(\sum_{j\ge1}(U^j)^2\right)^{1/2}.
\]
This function preserves the total mass and potential
energy without increasing the kinetic energy.
If more than one profile is nonzero, its $L^p$
term is strictly larger than the sum of the nonlinear
terms of the profiles.
Thus $W$ belongs to the same local admissible set and
would have energy strictly below $c_\mu$, a contradiction.
This proves compactness up to translations in the
$x_3$-direction.

For the second solution, we invoke the constrained minimax theorem from \cite{Borthwick-2024}, which yields positive mountain-pass solutions on the balls
\(B_R:=B_R(0)=\{x\in\mathbb{R}^3:|x|<R\}\), \(R>0\), together with a uniform bound on the Morse index.
 Related applications to normalized problems can be found
in \cite{BCJS-2023,CJS-2024,CRZ-2025,Wang-2024}. In our approximation, the Pohozaev identity yields
uniform bounds in $H$,
while symmetry and monotonicity obtained by moving plane
arguments \cite{Gidas-1979} give strong convergence 
in $L^p(\mathbb R^3)$.
 Along a subsequence with
$A(u_R)\to K_\infty$ and $\lambda_R\to\lambda_\infty$, the Morse index bound yields
\begin{equation}\label{eq:intro-spectral}
\lambda_\infty\geq-2\sqrt{a+bK_\infty}.
\end{equation}
Together with the equation and the uncertainty inequality
in the confined variables, this estimate shows that a
zero weak limit would force the energies to converge
to $d_\mu$.
Since the limiting mountain-pass level satisfies
$m_\mu>d_\mu$, the weak limit is nonzero.
Its positivity then yields a strict spectral gap and
strong convergence in $H$, recovering both the prescribed
mass and the Kirchhoff coefficient.

The paper is organized as follows. Section~\ref{Preliminaries} collects the spectral and compactness tools. 
Sections~\ref{Sec:Local} and~\ref{Sec:Mountain} are devoted to the local minimizer
and the mountain-pass solution, respectively. Throughout the paper, we write $u^-:=\max\{-u,0\}$ for a real-valued function $u$, and denote by $\|\cdot\|_q$ the norm in $L^q(\mathbb R^3)$ for $2\le q\le 6$.

\section{Preliminaries}\label{Preliminaries}

We begin with the Gagliardo--Nirenberg inequality:
for every $p\in(2,6)$, there exists $C_p>0$ such that
\begin{equation}\label{eq:GN}
P(u)\leq C_p\|u\|_2^{\frac{6-p}{2}}A(u)^{\frac{3p-6}{4}},
\qquad u\in H^1(\mathbb R^3).
\end{equation}

We write
\[
x=(x_\perp,x_3)\in\mathbb R^2\times\mathbb R,\quad
x_\perp:=(x_1,x_2),\quad
\nabla_\perp=(\partial_{x_1},\partial_{x_2}).
\]
For $z\in\mathbb R$, define the translation in the unconfined direction by
\[
(\tau_z u)(x_\perp,x_3):=u(x_\perp,x_3-z).
\]
The $H$-norm and the functional $J$ are invariant under $\tau_z$.

For $u\in C_c^\infty(\mathbb R^3)$, using integration by parts in the confined variables and the Cauchy--Schwarz inequality,
we have
\[
\|u\|_2^2
=
-\int_{\mathbb R^3}
u\,x_\perp\cdot\nabla_\perp u\,dx
\le
\|\nabla_\perp u\|_2\,C(u)^{\frac{1}{2}}.
\]
By density, we obtain the uncertainty inequality in the confined variables
\begin{equation}\label{eq:partial-uncertainty}
\left(\int_{\mathbb R^3}|\nabla_\perp u|^2\,dx\right)C(u)
\ge \|u\|_2^4,
\quad u\in H.
\end{equation}
Since $\|\nabla_\perp u\|_2^2\le A(u)$, we have
$A(u)C(u)\ge\|u\|_2^4$. Consequently, for every $M>0$
and $u\in H$,
\begin{equation}\label{eq:spectral-ineq}
MA(u)+C(u)
\ge 2\sqrt{M}\sqrt{A(u)C(u)}
\ge 2\sqrt{M}\|u\|_2^2.
\end{equation}\\

We next determine the bottom of the spectrum of the
partially confined operator.
 Define the energy space by
\[
H_\perp:=
\left\{
v\in H^1(\mathbb R^2;\mathbb R):
\int_{\mathbb R^2}|x_\perp|^2|v|^2\,dx_\perp<\infty
\right\}.
\]
For $M>0$, set
\[
\begin{aligned}
\Lambda(M)
&:=
\inf_{\substack{u\in H\\
\|u\|_{L^2(\mathbb R^3)}=1}}
\bigl(MA(u)+C(u)\bigr),\\
\lambda_\perp(M)
&:=
\inf_{\substack{v\in H_\perp\\
\|v\|_{L^2(\mathbb R^2)}=1}}
\left(
M\int_{\mathbb R^2}|\nabla_\perp v|^2\,dx_\perp
+\int_{\mathbb R^2}|x_\perp|^2|v|^2\,dx_\perp
\right).
\end{aligned}
\]

\begin{lemma}\label{lem:spectral-bottom}
For every $M>0$,
\begin{equation*}
\Lambda(M)=\lambda_\perp(M)=2\sqrt{M}.
\end{equation*}
Moreover, $\lambda_\perp(M)$ is attained by
\begin{equation*}
\Phi_M(x_\perp):=(\pi\sqrt{M})^{-\frac{1}{2}}
\exp\left(-\frac{|x_\perp|^2}{2\sqrt{M}}\right).
\end{equation*}
\end{lemma}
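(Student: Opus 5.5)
The plan is to obtain the lower bound $2\sqrt M$ for both infima from the uncertainty inequality, and the matching upper bound by testing with explicit Gaussians.

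\textbf{Lower bounds.} For $\Lambda(M)$ the lower bound is \eqref{eq:spectral-ineq}: if $u\in H$ and $\|u\|_2=1$, then $MA(u)+C(u)\ge 2\sqrt M$. For $\lambda_\perp(M)$ we use the two-dimensional analogue of \eqref{eq:partial-uncertainty}. For $v\in C_c^\infty(\mathbb R^2)$, integration by parts gives
\[
\|v\|_{L^2(\mathbb R^2)}^2=-\int_{\mathbb R^2}v\,x_\perp\cdot\nabla_\perp v\,dx_\perp\le\|\nabla_\perp v\|_{L^2}\,\||x_\perp|v\|_{L^2},
\]
using $\operatorname{div}x_\perp=2$. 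By density this holds on $H_\perp$. Combined with AM--GM, $M\|\nabla_\perp v\|^2+\||x_\perp|v\|^2\ge 2\sqrt M\|v\|^2$, so $\lambda_\perp(M)\ge 2\sqrt M$.

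\textbf{Attainment of $\lambda_\perp(M)$.} We compute directly with $\Phi_M$, writing $\sigma=\sqrt M$ so that $\Phi_M=(\pi\sigma)^{-1/2}e^{-|x_\perp|^2/(2\sigma)}$.
\begin{itemize}
\item The normalization is $\int_{\mathbb R^2} e^{-|x_\perp|^2/\sigma}\,dx_\perp=\pi\sigma$, so $\|\Phi_M\|_2=1$.
\item Since $\nabla\Phi_M=-(x_\perp/\sigma)\Phi_M$, we get $\int|\nabla\Phi_M|^2=\sigma^{-2}\int|x_\perp|^2\Phi_M^2$.
\item The Gaussian second moment is $\int|x_\perp|^2\Phi_M^2=\sigma$.
\end{itemize}
Hence the quadratic form equals $M\sigma^{-1}+\sigma=2\sqrt M$. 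Together with the lower bound, this shows $\lambda_\perp(M)=2\sqrt M$ and that it is attained by $\Phi_M$.

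\textbf{Upper bound for $\Lambda(M)$.} We take $u_n(x)=\Phi_M(x_\perp)\,\psi_n(x_3)$, where $\psi_n(x_3)=n^{-1/2}\psi(x_3/n)$ and $\psi\in C_c^\infty(\mathbb R)$ with $\|\psi\|_{L^2}=1$. Then $\|u_n\|_2=1$, $u_n\in H$, and
\[
MA(u_n)+C(u_n)=2\sqrt M+M\|\psi_n'\|_2^2=2\sqrt M+Mn^{-2}\|\psi'\|_2^2\to 2\sqrt M.
\]
So $\Lambda(M)\le 2\sqrt M$, and therefore $\Lambda(M)=2\sqrt M$.

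No step is a genuine obstacle. The only care needed is the density argument for the two-dimensional inequality on $H_\perp$, and checking the Gaussian moment constants.
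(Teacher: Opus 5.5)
Your proof is correct, but it takes a different route from the paper's. The paper computes nothing directly. It quotes the case $M=1$ (namely $\Lambda(1)=\lambda_\perp(1)=2$, attained by $\Phi_1$) from Bellazzini--Boussa\"id--Jeanjean--Visciglia. It then transfers this to general $M$ with the $L^2$-preserving dilation $T_M^{(d)}f(x)=M^{-d/8}f(M^{-1/4}x)$, which multiplies the quadratic form $M\int|\nabla f|^2+\int|x_\perp|^2f^2$ by $\sqrt M$ and maps $\Phi_1$ to $\Phi_M$.

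You instead prove everything from scratch for each fixed $M$:
\begin{itemize}
\item The lower bounds come from the uncertainty inequality combined with AM--GM. In 3D this is exactly \eqref{eq:spectral-ineq}; in 2D you use its analogue obtained via $\operatorname{div}x_\perp=2$.
\item The two-dimensional value and its attainment come from the explicit Gaussian moments $\|\nabla_\perp\Phi_M\|_2^2=1/\sqrt M$ and $\int|x_\perp|^2\Phi_M^2=\sqrt M$.
\item The 3D upper bound comes from the product functions $\Phi_M(x_\perp)\psi_n(x_3)$, which spread out in $x_3$.
\end{itemize}
All of your computations check out.

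What your route buys is a self-contained lemma that uses only tools already in the paper. Your 3D test functions are the same mechanism as the family $\psi_{\mu,t}$ used later for $d_\mu$. It also makes visible that $\Lambda(M)$ is only approached by spreading in the unconfined direction. The paper's route is shorter and isolates all $M$-dependence in a single scaling identity, at the cost of relying on an external result for $M=1$.
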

\begin{proof}
For \(M=1\), applying Lemma 2.1 in \cite{Bellazzini-2017}, we obtain
\[
\Lambda(1)=\lambda_\perp(1)=2
\]
(see also \cite[Lemma 2.4 and Remark 2.5]{Jeanjean-2025}). Moreover, the value \(\lambda_\perp(1)\) is attained by the normalized Gaussian
\[
\Phi_1(x_\perp)=\pi^{-\frac12}\exp\left(-\frac{|x_\perp|^2}{2}\right).
\]

For \(d\in\{2,3\}\), define
\[
(T_M^{(d)}f)(x):=M^{-\frac{d}{8}}f\left(M^{-\frac{1}{4}}x\right).
\]
In the formulas below, $x_\perp=(x_1, x_2)$ when $d=3$, whereas $x_\perp=x$ when $d=2$. The map $T_M^{(d)}$ preserves the $L^2$ norm, and a change
of variables gives
\[
\|T_M^{(d)}f\|_{L^2(\mathbb R^d)}=\|f\|_{L^2(\mathbb R^d)}
\]
and
\[
M\int_{\mathbb R^d}\left|\nabla T_M^{(d)}f\right|^2\,dx
+\int_{\mathbb R^d}|x_\perp|^2\left|T_M^{(d)}f\right|^2\,dx
=\sqrt M\left(\int_{\mathbb R^d}|\nabla f|^2\,dx+\int_{\mathbb R^d}|x_\perp|^2|f|^2\,dx\right).
\]
Since \(T_M^{(d)}\) is a bijection on the relevant energy space, we have
\[
\Lambda(M)=\sqrt M\,\Lambda(1),\qquad
\lambda_\perp(M)=\sqrt M\,\lambda_\perp(1).
\]
Combining this with \(\Lambda(1)=\lambda_\perp(1)=2\), we obtain
\[
\Lambda(M)=\lambda_\perp(M)=2\sqrt M.
\]

Finally, a direct computation yields
\[T_M^{(2)}\Phi_1(x_\perp)
=(\pi\sqrt M)^{-\frac{1}{2}}\exp\left(-\frac{|x_\perp|^2}{2\sqrt M}\right)=\Phi_M(x_\perp).
\]
Thus, $\lambda_\perp(M)$ is attained at $\Phi_M$.
\end{proof}

Using the transverse ground state, we establish the following strict lower bound for the Lagrange multiplier, which plays a crucial role in the compactness arguments below.

\begin{lemma}\label{lem:strict-edge}
Let $M>0$ and $r\in(2,6]$. If $0\neq u\in H$ is a nonnegative weak solution of
\begin{equation}\label{M EQ}
-M\Delta u+|x_\perp|^2u+\lambda u=|u|^{r-2}u\quad\text{in }\mathbb{R}^{3},
\end{equation}
then $\lambda>-2\sqrt{M}$.
\end{lemma}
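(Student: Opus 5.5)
The plan is to test the equation against a positive function built from the transverse ground state $\Phi_M$ and compare with the linear problem. Set $\psi(x):=\Phi_M(x_\perp)\,\eta(x_3)$, where $\eta>0$ is a suitable weight in $x_3$. Since $u\in H$ is only in $L^2$, the cleanest way to handle the unconfined direction is to integrate out $x_\perp$ first. Define
\[
g(x_3):=\int_{\mathbb R^2}u(x_\perp,x_3)\Phi_M(x_\perp)\,dx_\perp\ \ge 0 .
\]
Testing \eqref{M EQ} against $\Phi_M(x_\perp)\varphi(x_3)$ with $\varphi\in C_c^\infty(\mathbb R)$ gives, in the distributional sense on $\mathbb R$,
\[
-Mg''+(2\sqrt M+\lambda)g=h,\qquad h(x_3):=\int_{\mathbb R^2}u^{r-1}\Phi_M\,dx_\perp .
\]
Here I use that $\Phi_M$ is an eigenfunction: $-M\Delta_\perp\Phi_M+|x_\perp|^2\Phi_M=2\sqrt M\,\Phi_M$, which is the Euler--Lagrange equation of the minimization in Lemma~\ref{lem:spectral-bottom}, and I integrate by parts in $x_\perp$. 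Note that $g\in H^1(\mathbb R)$ with $g\ge0$, and $h\ge0$ with $h\not\equiv0$, because $u\ge0$, $u\ne0$ and $\Phi_M>0$. Also $h\in L^1(\mathbb R)$: $u^{r-1}\in L^1_{loc}$, and a Hölder bound $\int u^{r-1}\Phi_M\le \|u\|_r^{r-1}\|\Phi_M\|_{L^r}$ controls it uniformly in slabs. Since the global integrability in $x_3$ is delicate, I will use cutoffs.

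Next I test the one-dimensional equation against the constant $1$, approximated by cutoffs $\varphi_R(x_3)=\varphi(x_3/R)$. Then
\[
(2\sqrt M+\lambda)\int g\varphi_R-\frac{M}{R^2}\int g\varphi''(\cdot/R)=\int h\varphi_R .
\]
Suppose $\lambda\le -2\sqrt M$. Then the left side satisfies
\[
\text{LHS}\le \frac{M}{R^2}\|g\|_{L^1(|x_3|\le 2R)}\|\varphi''\|_\infty\le CR^{-3/2}\|g\|_2\to0 .
\]
The right side, by monotone convergence (choosing $\varphi_R$ increasing in $R$), tends to $\int h\in(0,\infty]$. This is a contradiction, which proves $\lambda>-2\sqrt M$. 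The case $\lambda=-2\sqrt M$ and the case $\lambda<-2\sqrt M$ are handled uniformly here, because the $\lambda$ term has a favorable sign when $g\ge0$.

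The main obstacle is justifying the reduced equation rigorously. Specifically, I need $\Phi_M(x_\perp)\varphi(x_3)$ to be an admissible test function for a weak solution in $H$, and I need to verify the eigenfunction identity. $\Phi_M\varphi$ is not compactly supported, but it lies in $H$ with Gaussian decay. The weak formulation extends to test functions in $H$ for which $\int u^{r-1}\psi$ makes sense, and for $r\le6$ this holds because $u^{r-1}\in L^{6/(r-1)}$ and $\Phi_M\varphi\in L^{6/(7-r)}$, by Sobolev. For the eigenfunction identity, a direct computation with $\Delta_\perp e^{-|x_\perp|^2/(2\sqrt M)}$ confirms $-M\Delta_\perp\Phi_M+|x_\perp|^2\Phi_M=2\sqrt M\Phi_M$. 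The remaining step is the integration by parts moving $\Delta_\perp$ from $u$ onto $\Phi_M$, which follows by approximating $\Phi_M$ with truncations in $H_\perp$.
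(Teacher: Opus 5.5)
Your proof is correct, and its core is the paper's. You project onto the transverse ground state $\Phi_M$ to obtain $-Mg''+(2\sqrt M+\lambda)g=h$ in $\mathcal D'(\mathbb R)$, with $0\le g\in H^1(\mathbb R)$, $h\ge0$ and $h\not\equiv0$ (your $g$ and $h$ are the paper's $h$ and $g$, respectively).

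The two arguments differ only in the final Liouville-type step. The paper notes that the projection lies in $W^{2,1}_{\mathrm{loc}}(\mathbb R)$, so under $\lambda\le-2\sqrt M$ it is a nonnegative concave function on $\mathbb R$. Hence it is constant, hence zero by square integrability, and this forces the source term to vanish. You instead test the reduced equation with $\varphi(x_3/R)$. The $\lambda$-term has the favorable sign, the $\varphi''$-term is $O(R^{-3/2})$ by Cauchy--Schwarz on $\{|x_3|\le2R\}$, and $\int h\varphi_R$ increases to $\int h\in(0,+\infty]$. This trades the paper's regularity and concavity step for a growth bound, $\int_{|x_3|\le2R}g=o(R^2)$, which $g\in L^2(\mathbb R)$ supplies. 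The paper's argument, by contrast, needs the $W^{2,1}_{\mathrm{loc}}$ regularity but no growth control to get constancy.

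Your allowance for $\int h=+\infty$ is actually needed. Your aside that $h\in L^1(\mathbb R)$ is not justified: for $r<3$, membership in $H$ alone does not make $\int_{\mathbb R^3}u^{r-1}\Phi_M\,dx$ finite. However, only $h\in L^1_{\mathrm{loc}}(\mathbb R)$ is used. That follows from your slab estimate, or from the paper's bound, which reads $\|h\|_{L^{r/(r-1)}(\mathbb R)}\le\|\Phi_M\|_{L^r(\mathbb R^2)}\|u\|_r^{r-1}$ in your notation.
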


\begin{proof}
Let $\Phi_M>0$ be given in Lemma~\ref{lem:spectral-bottom}, so that
\begin{equation*}
-M\Delta_\perp\Phi_M+|x_\perp|^2\Phi_M=2\sqrt M\,\Phi_M\quad\text{in }\mathbb R^2,
\quad\|\Phi_M\|_{L^2(\mathbb R^2)}=1.
\end{equation*}
For almost every $x_3\in\R$, set
\[
h(x_3):=\int_{\mathbb R^2}u(x_\perp,x_3)\Phi_M(x_\perp)\,dx_\perp
\]
and
\[
g(x_3):=\int_{\mathbb R^2}u(x_\perp,x_3)^{r-1}\Phi_M(x_\perp)\,dx_\perp.
\]

By Cauchy--Schwarz inequality it follows that
\[
\|h\|_{L^2(\R)}\leq\|u\|_2,\qquad
\|h'\|_{L^2(\R)}\leq\|\partial_3u\|_2,
\]
where
\[
h'(x_3)=\int_{\R^2}\partial_3u(x_\perp,x_3)\Phi_M(x_\perp)\,\mathrm dx_\perp
\]
in the distributional sense. Hence $h\in H^1(\mathbb R)$.

Since $u\in L^r(\mathbb R^3)$ and $\Phi_M\in L^r(\mathbb R^2)$, using H\"older's inequality, we get
\[
|g(x_3)|\leq\left(\int_{\mathbb R^2}|u(x_\perp,x_3)|^r\,dx_\perp\right)^{\frac{r-1}{r}}
\|\Phi_M\|_{L^r(\mathbb R^2)}.
\]
Thus, it follows that
\[
\|g\|_{L^{\frac{r}{r-1}}(\mathbb R)}\leq\|\Phi_M\|_{L^r(\mathbb R^2)}\|u\|_r^{r-1}<+\infty.
\]
Consequently, $g\in L^{\frac r{r-1}}(\mathbb R)$ and, in particular,
$g\in L^1_{\mathrm{loc}}(\mathbb R)$. Since $u\geq0$, $u\not\equiv0$, and $\Phi_M>0$, we also have
$h\geq0$, $g\geq0$ and $g\not\equiv0$.

Let $\varphi\in C_c^\infty(\mathbb R)$. Testing \eqref{M EQ} with $\Phi_M(x_\perp)\varphi(x_3)$, we obtain
\[
M\int_{\mathbb R}h'\varphi'\,dx_3+(2\sqrt M+\lambda)\int_{\mathbb R}h\varphi\,dx_3
=\int_{\mathbb R}g\varphi\,dx_3.
\]
Therefore, $h$ satisfies
\begin{equation}\label{eq:projected-equation}
-Mh''+(2\sqrt M+\lambda)h=g
\quad\text{in }\mathcal D'(\mathbb R).
\end{equation}
Here, $\mathcal D'(\mathbb R)$ denotes the space of distributions on $\mathbb R$.

Suppose, by contradiction, that $\lambda\leq-2\sqrt M$.
Since $h\in H^1(\mathbb R)$ and $g\in L^1_{\mathrm{loc}}(\mathbb R)$,
\eqref{eq:projected-equation} implies that
$h\in W^{2,1}_{\mathrm{loc}}(\mathbb R)$.
Moreover, using $h\geq0$ and $g\geq0$, we obtain
\[
-Mh''=g-(2\sqrt M+\lambda)h\geq0
\qquad\text{a.e. in }\mathbb R.
\]
Thus $h'$ is locally absolutely continuous and nonincreasing,
so $h$ is concave.
Since $h$ is nonnegative on $\mathbb R$, it must be constant.
The fact that $h\in L^2(\mathbb R)$ then gives $h\equiv0$.
Returning to \eqref{eq:projected-equation}, we obtain $g\equiv0$,
contrary to $g\not\equiv0$.
Therefore, $\lambda>-2\sqrt M$.
\end{proof}

We next give a variational characterization of $d_\mu$, which
will be used to rule out vanishing of minimizing sequences.
For $u\in S_\mu$, set
\[
J_0(u):=\frac{a}{2}A(u)+\frac{b}{4}A(u)^2+\frac{1}{2}C(u).
\]
Define
\[
f_\mu(s):=\frac{a}{2}s+\frac{b}{4}s^2+\frac{\mu^2}{2s},
\qquad s>0,
\]
so that $d_\mu=\min_{s>0}f_\mu(s)$.
The equation
\begin{equation}\label{eq:s-mu-equation}
as^2+bs^3=\mu^2
\end{equation}
has a unique positive solution, denoted by $s_\mu$.

To approximate $d_\mu$, we introduce a family of test functions
that spread in the unconfined $x_3$-direction.
Let $\Phi_1$ be given by Lemma~\ref{lem:spectral-bottom}. By direct computations, we get
\begin{equation*}
\int_{\mathbb{R}^{2}}|\Phi_1|^2\,dx_\perp
=\int_{\mathbb{R}^{2}}|\nabla_\perp\Phi_1|^2\,dx_\perp
=\int_{\mathbb{R}^{2}}|x_\perp|^2|\Phi_1|^2\,dx_\perp=1.
\end{equation*}
For $\sigma>0$, define
\[
\Psi_\sigma(x_\perp):=\sigma\Phi_1(\sigma x_\perp),\quad\sigma_\mu^2:=\frac{s_\mu}{\mu}.
\]
Choose $0\leq\eta\in C_c^\infty(\mathbb{R})$ with $\|\eta\|_2=1$ and set
\begin{equation}\label{eq:test-family}
\eta_t(x_3):=t^{\frac{1}{2}}\eta(tx_3),\quad
\psi_{\mu,t}(x_\perp,x_3):=\sqrt{\mu}\,\Psi_{\sigma_\mu}(x_\perp)\eta_t(x_3).
\end{equation}
Obviously, $\psi_{\mu,t}\in S_\mu$. Moreover, we have
\begin{align}
A(\psi_{\mu,t})&=s_\mu+\mu t^2\int_{\mathbb{R}}|\eta'|^2\,dx_3,\label{eq:test-A}\\
C(\psi_{\mu,t})&=\frac{\mu^2}{s_\mu}=a s_\mu+b s_\mu^2,\label{eq:test-C}\\
P(\psi_{\mu,t})&=\mu s_\mu^{\frac{p-2}{2}}t^{\frac{p-2}{2}}
\left(\int_{\mathbb{R}^{2}}|\Phi_1|^p\,dx_\perp\right)
\left(\int_{\mathbb{R}}|\eta|^p\,dx_3\right).
\label{eq:test-P}
\end{align}
Since $p>2$, \eqref{eq:test-P} yields $P(\psi_{\mu,t})\to0$ as $t\to0^+$.

\begin{lemma}\label{lem:vanishing-level}$s_\mu$ is the unique minimizer of $f_\mu$, and $d_\mu$, defined in
\eqref{eq:vanishing-threshold}, satisfies
\begin{equation}\label{eq:d-mu-formula}
d_\mu=a s_\mu+\frac{3b}{4}s_\mu^2.
\end{equation}
Moreover, $d_\mu=\inf\limits_{u\in S_\mu}J_0(u)$ is not attained, and $J_0(\psi_{\mu,t})\to d_\mu$ as $t\to0^+$.
As $\mu\to0^+$, we have
\begin{equation}\label{eq:s-d-asymptotics}
\frac{s_\mu}{\mu}\to\frac{1}{\sqrt{a}},\quad\frac{d_\mu}{\mu}\to\sqrt{a}.
\end{equation}
\end{lemma}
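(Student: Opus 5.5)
We begin with the one-variable function $f_\mu$. Its derivative is
\[
f_\mu'(s)=\frac a2+\frac b2 s-\frac{\mu^2}{2s^2}=\frac{as^2+bs^3-\mu^2}{2s^2}.
\]
The map $s\mapsto as^2+bs^3$ is strictly increasing on $(0,\infty)$, runs from $0$ to $\infty$, and equals $\mu^2$ exactly at $s_\mu$ by \eqref{eq:s-mu-equation}. Hence $f_\mu'<0$ on $(0,s_\mu)$ and $f_\mu'>0$ on $(s_\mu,\infty)$. Together with $f_\mu(s)\to\infty$ as $s\to0^+$ and as $s\to\infty$, this shows that $s_\mu$ is the unique minimizer. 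Substituting $\mu^2/s_\mu=as_\mu+bs_\mu^2$ gives
\[
d_\mu=\frac a2 s_\mu+\frac b4 s_\mu^2+\frac a2 s_\mu+\frac b2 s_\mu^2=as_\mu+\frac{3b}{4}s_\mu^2,
\]
which is \eqref{eq:d-mu-formula}.

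For the variational characterization we prove two inequalities. For the lower bound, take $u\in S_\mu$. The uncertainty inequality \eqref{eq:partial-uncertainty}, together with $\|\nabla_\perp u\|_2^2\le A(u)$, gives $A(u)C(u)\ge\mu^2$, so $C(u)\ge\mu^2/A(u)$. Therefore $J_0(u)\ge f_\mu(A(u))\ge d_\mu$. Note that $A(u)>0$ since $u\neq0$.

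For the upper bound, use the test family \eqref{eq:test-family} with \eqref{eq:test-A} and \eqref{eq:test-C}. As $t\to0^+$ we have $A(\psi_{\mu,t})\to s_\mu$, while $C(\psi_{\mu,t})=\mu^2/s_\mu$ for every $t$. Hence
\[
J_0(\psi_{\mu,t})\to \frac a2s_\mu+\frac b4s_\mu^2+\frac{\mu^2}{2s_\mu}=f_\mu(s_\mu)=d_\mu.
\]
This yields $\inf_{S_\mu}J_0=d_\mu$ and the stated convergence.

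Non-attainment is the main step. Suppose $u\in S_\mu$ satisfies $J_0(u)=d_\mu$. Every inequality in the lower-bound chain is then an equality. In particular $A(u)=s_\mu$ and $C(u)=\mu^2/s_\mu$, so $\tfrac{s_\mu}{\mu}A(u)+\tfrac{\mu}{s_\mu}C(u)=2\mu$, i.e. equality holds in \eqref{eq:spectral-ineq} with $M=s_\mu^2/\mu^2$. Replacing $u$ by $|u|$ preserves $A$, $C$ and the mass, so we may assume $u\ge0$ and $u$ is a minimizer for $\Lambda(M)=2\sqrt M$ in Lemma~\ref{lem:spectral-bottom}.

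To exclude this, we would try one of two routes. The first route uses the equality itself: $A(u)=\|\nabla_\perp u\|_2^2$ forces $\partial_3u=0$. A function in $L^2(\mathbb R^3)$ that is independent of $x_3$ must vanish, which contradicts $\|u\|_2^2=\mu>0$. The second route argues through the Euler--Lagrange equation: a minimizer would be a nonnegative nontrivial solution of $-M\Delta u+|x_\perp|^2u-2\sqrt M u=0$. This is \eqref{M EQ} with $\lambda=-2\sqrt M$ and no nonlinearity. The projection argument of Lemma~\ref{lem:strict-edge} then gives $-Mh''=0$, so $h\equiv0$, and positivity of $\Phi_M$ forces $u\equiv0$. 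I expect the first route to be cleanest, since it needs only the identity $\|\nabla u\|_2^2=\|\nabla_\perp u\|_2^2+\|\partial_3u\|_2^2$.

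Finally, the asymptotics. Set $r=s_\mu/\mu$. Dividing \eqref{eq:s-mu-equation} by $\mu^2$ gives $ar^2+b\mu r^3=1$. This shows that $r\le a^{-1/2}$ is bounded, and then that $ar^2\to1$, i.e. $r\to a^{-1/2}$. Consequently
\[
\frac{d_\mu}{\mu}=ar+\frac{3b}{4}\mu r^2\to\sqrt a.
\]
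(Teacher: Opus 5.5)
Your proposal is correct and follows the paper's proof essentially step for step. In particular, your first route for non-attainment is exactly the paper's argument: equality throughout $\mu^2\le\|\nabla_\perp u\|_2^2C(u)\le A(u)C(u)=\mu^2$ forces $\partial_{x_3}u=0$, and hence $u=0$.

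One slip occurs in the unused aside. Equality in \eqref{eq:spectral-ineq} holds for $M=\mu^2/s_\mu^2=a+bs_\mu$, not for $M=s_\mu^2/\mu^2$. The displayed identity should therefore read $\frac{\mu}{s_\mu}A(u)+\frac{s_\mu}{\mu}C(u)=2\mu$; as you wrote it, it holds only when $s_\mu=\mu$.
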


\begin{proof}
By direct computations, we get
\begin{equation*}
f_\mu'(s)=\frac{a}{2}+\frac{b}{2}s-\frac{\mu^2}{2s^2},
\quad f_\mu''(s)=\frac{b}{2}+\frac{\mu^2}{s^3}>0.
\end{equation*}
Moreover, it follows that
\[
f_\mu'(s)\to-\infty\quad\text{as }s\to0^+,\quad f_\mu'(s)\to+\infty\quad\text{as }s\to+\infty.
\]
Since $f_\mu''(s)>0$, the derivative $f_\mu'$ is strictly increasing and hence has a unique zero. Consequently, $f_\mu$ possesses a unique critical point $s_\mu$, which is its unique global minimizer. Using \eqref{eq:s-mu-equation} at $s=s_\mu$, we obtain \eqref{eq:d-mu-formula}.

For every $u\in S_\mu$, we have $A(u)>0$. Thus, by \eqref{eq:partial-uncertainty},
\begin{equation*}
C(u)\geq\frac{\mu^2}{A(u)},
\end{equation*}
and hence
\begin{equation*}
J_0(u)\geq f_\mu(A(u))\geq d_\mu.
\end{equation*}
On the other hand, using \eqref{eq:test-A} and \eqref{eq:test-C}, we get
\begin{equation*}
J_0(\psi_{\mu,t})\to\frac{a}{2}s_\mu+\frac{b}{4}s_\mu^2+\frac{\mu^2}{2s_\mu}=d_\mu
\end{equation*}
as $t\to0^+$.

Suppose that $d_\mu$ is attained by some $u\in S_\mu$. Then, we have $A(u)C(u)=\mu^2$. Since $C(u)>0$, it follows from \eqref{eq:partial-uncertainty} that
\begin{equation*}
\mu^2\leq\|\nabla_\perp u\|_2^2C(u)\leq A(u)C(u)=\mu^2.
\end{equation*}
Thus, $\|\partial_{x_3}u\|_2=0$, implying that $u$ is independent of $x_3$ almost everywhere. As $u\in L^2(\mathbb{R}^{3})$, it follows that $u=0$, which contradicts $\|u\|_2^2=\mu>0$. Consequently, $d_\mu$ is not attained.

Finally, dividing \eqref{eq:s-mu-equation} by $\mu^2$ and setting
$t_\mu:=\frac{s_\mu}{\mu}$, we obtain $a t_\mu^2+b\mu t_\mu^3=1$,
and hence $0<t_\mu\leq a^{-\frac{1}{2}}$. Therefore, we have
\[
0\leq1-a t_\mu^2=b\mu t_\mu^3\leq b\mu a^{-\frac{3}{2}}\to0,
\]
which means
\[
\frac{s_\mu}{\mu}=t_\mu\to\frac{1}{\sqrt a}.
\]
Then, using \eqref{eq:d-mu-formula}, we obtain
\[
\frac{d_\mu}{\mu}=a\frac{s_\mu}{\mu}+\frac{3b}{4}\mu\left(\frac{s_\mu}{\mu}\right)^2\to\sqrt a.
\]
\end{proof}

For the local minimizer, where no symmetry is imposed, we use a profile decomposition in which translations  are allowed only along the unconfined $x_3$-direction. This is a standard consequence of the profile decomposition in $H^1(\R^3)$. See \cite[Proposition 3.1]{HmidiKeraani}. For the partially confined setting, we refer the reader in particular to \cite[Lemma 2.6]{LiuHeFeng}.

\begin{lemma}\label{lem:profile}
Let $\{v_n\}$ be bounded in $H$. After passing to a subsequence, still denoted by $\{v_n\}$, there exist profiles $\{U^j\}_{j\geq1}\subset H$, sequences $\{z_n^j\}_{n\geq1}\subset\mathbb R$, indexed by $j\geq1$, and remainders $r_n^\ell\in H$ such that the following properties hold.

\begin{enumerate}
\item[(i)]
For every $j\neq k$, $|z_n^j-z_n^k|\to+\infty$ as $n\to\infty$.

\item[(ii)]
For every fixed $\ell\geq1$,
\[
v_n=\sum_{j=1}^{\ell}\tau_{z_n^j}U^j+r_n^\ell\quad\text{in }H.
\]
Moreover, for every $q\in(2,6)$,
\begin{equation}\label{eq:remainder-Lq}
\lim_{\ell\to\infty}\limsup_{n\to\infty}\|r_n^\ell\|_q=0.
\end{equation}

\item[(iii)]
For every fixed $j\geq1$,
\begin{equation}\label{eq:shifted-weak}
\tau_{-z_n^j}v_n\rightharpoonup U^j\quad\text{in }H.
\end{equation}
Moreover, for every fixed $\ell\geq1$ and every $j\leq\ell$,
\[
\tau_{-z_n^j}r_n^\ell\rightharpoonup0\quad\text{in }H.
\]

\item[(iv)]
For every fixed $\ell\geq1$, as $n\to\infty$,
\begin{align*}
\|v_n\|_2^2&=\sum_{j=1}^{\ell}\|U^j\|_2^2+\|r_n^\ell\|_2^2+o_n(1),\\
A(v_n)&=\sum_{j=1}^{\ell}A(U^j)+A(r_n^\ell)+o_n(1),\\
C(v_n)&=\sum_{j=1}^{\ell}C(U^j)+C(r_n^\ell)+o_n(1),\\
P(v_n)&=\sum_{j=1}^{\ell}P(U^j)+P(r_n^\ell)+o_n(1),
\end{align*}
where each $o_n(1)$ tends to zero as $n\to\infty$, with $\ell$ fixed.
\end{enumerate}
\end{lemma}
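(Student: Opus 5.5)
The plan is to deduce Lemma~\ref{lem:profile} from the profile decomposition of Hmidi--Keraani \cite[Proposition 3.1]{HmidiKeraani} in $H^1(\mathbb R^3)$. Two things must be added: all translation parameters can be taken in the $x_3$-direction only, and the decomposition also holds in $H$, including the splitting of $C$.

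\emph{Step 1: apply the $H^1$ decomposition.} Since $\{v_n\}$ is bounded in $H$, it is bounded in $H^1(\mathbb R^3)$. Proposition~3.1 of \cite{HmidiKeraani} then gives, after extraction, profiles $U^j\in H^1$ and translations $y_n^j\in\mathbb R^3$ with pairwise diverging differences. It also gives remainders with the $L^q$ smallness \eqref{eq:remainder-Lq} for $q\in(2,6)$, Pythagorean expansions of $\|\cdot\|_2^2$, $A$ and $P$, and the weak convergences $v_n(\cdot+y_n^j)\rightharpoonup U^j$.

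\emph{Step 2: confine the translations.} Write $y_n^j=(y_{n,\perp}^j,y_{n,3}^j)$. Suppose $U^j\neq0$ and $|y_{n,\perp}^j|\to\infty$ along a subsequence. Take $B$ a ball with $\int_B|U^j|^2>0$. Weak convergence in $H^1$ gives strong convergence in $L^2(B)$, and $|x_\perp|\ge |y^j_{n,\perp}|-R$ on $B+y_n^j$. Hence
\[
C(v_n)\ge \int_{B+y_n^j}|x_\perp|^2|v_n|^2\,dx\to\infty,
\]
which contradicts boundedness in $H$. So $y_{n,\perp}^j$ is bounded for every nonzero profile, and after extraction $y_{n,\perp}^j\to\bar y^j$. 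We then replace $U^j$ by $U^j(\cdot-(\bar y^j,0))$ and set $z_n^j:=y_{n,3}^j$. The resulting error $\tau_{z_n^j}[U^j(\cdot-(\bar y^j,0)+(\bar y^j-y_{n,\perp}^j,0))-U^j(\cdot-(\bar y^j,0))]$ tends to zero in $H^1$ and can be absorbed into the remainder. Zero profiles are discarded. Property (i) holds because bounded $\perp$-components force $|z_n^j-z_n^k|\to\infty$.

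\emph{Step 3: the $H$-structure and the splitting of $C$.} This is the main point. First, $U^j\in H$: $\tau_{-z_n^j}v_n$ is bounded in $H$, since $C$ is invariant under $\tau_z$, so it converges weakly in $H$ along a subsequence. Its limit must coincide with the $H^1$-weak limit $U^j$, which gives \eqref{eq:shifted-weak} in $H$. Consequently $r_n^\ell\in H$ as well.

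For the splitting of $C$, the weight $w=|x_\perp|^2$ is invariant under $\tau_z$. Expanding $C(v_n)=C(\sum_j\tau_{z_n^j}U^j+r_n^\ell)$, the cross terms are of two kinds:
\[
\int w\,\tau_{z_n^j}U^j\,\tau_{z_n^k}U^k\to0 \quad (j\ne k),\qquad \int w\,\tau_{z_n^j}U^j\,r_n^\ell\to0 .
\]
The first vanishes because $\sqrt w\,U^j\in L^2$ and translations by diverging $z_n^j-z_n^k$ make the product tend to zero. The second equals $\int wU^j\,\tau_{-z_n^j}r_n^\ell$; it vanishes by weak convergence $\tau_{-z_n^j}r_n^\ell\rightharpoonup0$ in $H$, since $v\mapsto\int wU^jv$ is a continuous functional on $H$. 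This weak convergence follows from \eqref{eq:shifted-weak} and the fact that the other translated profiles converge weakly to $0$.

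The same argument gives the expansions of $\|\cdot\|_2^2$ and $A$ directly. The expansion of $P$ is taken from \cite{HmidiKeraani}, or proved by the standard Brezis--Lieb plus orthogonality argument. Together these give (iv), which completes the proof.
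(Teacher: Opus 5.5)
Your proposal is correct and takes the same approach as the paper. The paper gives no argument of its own: it calls the lemma a standard consequence of \cite[Proposition 3.1]{HmidiKeraani} and cites \cite[Lemma 2.6]{LiuHeFeng} for the partially confined setting, and your reduction (bounding the transverse translations via $C(v_n)$, identifying the weak limits in $H$, and splitting $C$ through the $\tau_z$-invariance of $|x_\perp|^2$) is the standard derivation behind that citation. For the fixed-$\ell$ splitting of $P$, rely on the iterated Brezis--Lieb argument you mention, since that expansion with remainder is not part of the statement in \cite{HmidiKeraani}.
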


The following energy comparison for merged profiles retains
the dependence of the Kirchhoff term on the total kinetic energy
and will be used to exclude splitting of minimizing sequences.
\begin{theorem}\label{thm:general-merging}
Let $k\in C([0,+\infty);(0,+\infty))$ and define
\[
\widehat k(s):=\int_0^s k(\tau)\,d\tau,\quad s\geq0.
\]
For $v\in H$, set
\[
\mathcal{E}_k(v):=\frac{1}{2}\widehat k(A(v))+\frac{1}{2}C(v)-\frac{1}{p}P(v).
\]
Let $\{U^j\}_{j\geq1}\subset H$ be nonnegative and assume that
\[
\sum_{j\geq1}\left(\|U^j\|_2^2+A(U^j)+C(U^j)+P(U^j)\right)<+\infty.
\]
Set
\[
W:=\left(\sum_{j\geq1}(U^j)^2\right)^{\frac{1}{2}}.
\]
Then, $W\in H$ and
\begin{equation}\label{eq:merging-properties}
\|W\|_2^2=\sum_{j\geq1}\|U^j\|_2^2,\quad
C(W)=\sum_{j\geq1}C(U^j),\quad
A(W)\leq\sum_{j\geq1}A(U^j).
\end{equation}
If at least two of the nonzero functions $U^j$ are positive almost everywhere, then
\begin{equation}\label{eq:merging-P}
P(W)>\sum_{j\geq1}P(U^j),
\end{equation}
and consequently
\begin{equation}\label{strict energy inequality}
\mathcal{E}_k(W)
<\frac{1}{2}\widehat k\left(\sum_{j\geq1}A(U^j)\right)+\frac{1}{2}\sum_{j\geq1}C(U^j)
-\frac{1}{p}\sum_{j\geq1}P(U^j).
\end{equation}
\end{theorem}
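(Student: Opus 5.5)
The plan is to work first with finite sums and then pass to the limit. Set $W_\ell:=(\sum_{j\le\ell}(U^j)^2)^{1/2}$. Pointwise, $W_\ell\uparrow W$, and
\[
W^2=\sum_j (U^j)^2,\qquad |x_\perp|^2W^2=\sum_j|x_\perp|^2(U^j)^2 .
\]
Monotone convergence then gives $\|W\|_2^2=\sum_j\|U^j\|_2^2$ and $C(W)=\sum_jC(U^j)$ directly.

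For the gradient I would use the chain rule for $F(y)=|y|$, $y\in\mathbb R^\ell$. This function is Lipschitz, so $W_\ell=F(U^1,\dots,U^\ell)\in H^1$. Where $W_\ell>0$ we have
\[
\nabla W_\ell=\frac{\sum_j U^j\nabla U^j}{W_\ell},
\]
and Cauchy--Schwarz gives $|\nabla W_\ell|^2\le\sum_{j\le\ell}|\nabla U^j|^2$ a.e. (On $\{W_\ell=0\}$ the gradient vanishes a.e.) Alternatively one can smooth with $\sqrt{\varepsilon^2+\sum_j(U^j)^2}$ and let $\varepsilon\to0$. This yields $A(W_\ell)\le\sum_{j\le\ell}A(U^j)$, so $\{W_\ell\}$ is bounded in $H$. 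Since $W_\ell\to W$ in $L^2$ (dominated convergence), we get $W_\ell\rightharpoonup W$ in $H$. Weak lower semicontinuity then gives $W\in H$ and $A(W)\le\sum_jA(U^j)$, which completes \eqref{eq:merging-properties}.

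For \eqref{eq:merging-P}, write $p/2=:\beta>1$. For nonnegative reals $t_j$ with $\sum_j t_j<\infty$, superadditivity gives
\[
\Big(\sum_j t_j\Big)^\beta\ge\sum_j t_j^\beta ,
\]
with strict inequality whenever at least two $t_j$ are positive. Indeed, for $t_1,t_2>0$ we have $(t_1+t_2)^\beta>t_1^\beta+t_2^\beta$, and adding the remaining terms only preserves this, again by superadditivity. Apply this with $t_j=(U^j(x))^2$ to get $W^p\ge\sum_j(U^j)^p$ pointwise. If $U^{j_1},U^{j_2}$ are both positive a.e., the inequality is strict on a set of full measure. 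Integrating, and using $\sum_jP(U^j)<\infty$ so that the right-hand side is finite, gives the strict inequality $P(W)>\sum_jP(U^j)$. For $P(W)$ itself, the bound $P(W)<\infty$ follows from $W\in H\subset L^p$, although finiteness is not even needed for the strict inequality.

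Finally, \eqref{strict energy inequality} follows because $k>0$ makes $\widehat k$ nondecreasing, so
\[
\widehat k(A(W))\le\widehat k\Big(\sum_jA(U^j)\Big).
\]
Combining this with the equality for $C$ and the strict inequality for $P$ gives the claim.

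The main obstacle is the rigorous gradient bound for the infinite merge, namely the chain rule for $|\cdot|$ composed with an $\ell^2$-valued function. Reducing to finite $\ell$ and then using weak lower semicontinuity is the step that handles it.
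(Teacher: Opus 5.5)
Your proposal is correct and follows essentially the same route as the paper. You truncate to finite sums and obtain the Cauchy--Schwarz gradient bound; your $\varepsilon$-regularized alternative is exactly the paper's device, and the Lipschitz chain rule is an equivalent justification. You then pass to the limit via monotone convergence for the mass and $C$ and weak lower semicontinuity for $A$, integrate the strict pointwise superadditivity of $t\mapsto t^{p/2}$, and conclude with the monotonicity of $\widehat k$.
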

\begin{proof}For $N\geq1$, set
\[
W_N:=\left(\sum_{j=1}^{N}(U^j)^2\right)^{1/2}.
\]
For $\varepsilon>0$, define
\[
W_{N,\varepsilon}
:=
\left(\varepsilon^2+\sum_{j=1}^{N}(U^j)^2\right)^{1/2}
-\varepsilon.
\]
The defining map is smooth, vanishes at the origin,
and has bounded first derivatives.
Hence $W_{N,\varepsilon}\in H^1(\mathbb R^3)$, and
the Cauchy--Schwarz inequality gives
\[
A(W_{N,\varepsilon})
=
\int_{\mathbb R^3}
\frac{\left|\sum_{j=1}^{N}U^j\nabla U^j\right|^2}
     {\varepsilon^2+\sum_{j=1}^{N}(U^j)^2}\,dx
\leq \sum_{j=1}^{N}A(U^j).
\]
Since $W_N\in L^2(\mathbb R^3)$,
$0\leq W_{N,\varepsilon}\leq W_N$, and
$W_{N,\varepsilon}\to W_N$ almost everywhere as
$\varepsilon\to0$, by the dominated convergence theorem it follows that
$W_{N,\varepsilon}\to W_N$ in $L^2(\mathbb R^3)$.
By weak compactness and lower semicontinuity,
$W_N\in H^1(\mathbb R^3)$ and
\[
A(W_N)\leq\sum_{j=1}^{N}A(U^j).
\]
Together with $W_N^2=\sum_{j=1}^{N}(U^j)^2$,
this implies that $W_N\in H$ and
\[
\|W_N\|_H^2
\leq\sum_{j=1}^{N}\|U^j\|_H^2
\leq\sum_{j\geq1}\|U^j\|_H^2<\infty.
\]
Thus $\{W_N\}$ is bounded in $H$.

As $W_N\uparrow W$, using the monotone convergence theorem, we get 
\[
\|W\|_2^2=\sum_{j\geq1}\|U^j\|_2^2,
\qquad
C(W)=\sum_{j\geq1}C(U^j).
\]
Moreover, since $0\leq W_N\leq W$,
\[
\|W-W_N\|_2^2
\leq\int_{\mathbb R^3}(W^2-W_N^2)\,dx
=\sum_{j>N}\|U^j\|_2^2
\longrightarrow0.
\]
The boundedness of $\{W_N\}$ in $H$ and this $L^2$ convergence
imply that $W\in H$ and, after passing to a subsequence,
$W_N\rightharpoonup W$ in $H$.
By weak lower semicontinuity and the gradient estimate above,
\[
A(W)\leq\sum_{j\geq1}A(U^j).
\]
This proves \eqref{eq:merging-properties}.

Suppose now that at least two of the functions $U^j$
are positive almost everywhere.
Since $p>2$,
\[
\left(\sum_{j\geq1}(U^j)^2\right)^{p/2}
>
\sum_{j\geq1}(U^j)^p
\quad\text{a.e. in }\mathbb R^3.
\]
Integrating this inequality yields \eqref{eq:merging-P}.
Finally, since $k>0$, its primitive $\widehat k$ is strictly
increasing. Combining this monotonicity with
\eqref{eq:merging-properties} and \eqref{eq:merging-P}
gives \eqref{strict energy inequality}. 
\end{proof}
\begin{remark}
The general formulation above is consistent with the usual Kirchhoff structure. Indeed, for every $v,\varphi\in H$,
\[
\left.\frac{d}{d\varepsilon}\right|_{\varepsilon=0}
\frac12\widehat k\bigl(A(v+\varepsilon\varphi)\bigr)
=k(A(v))\int_{\mathbb R^3}\nabla v\cdot\nabla\varphi\,dx.
\]
Hence, $k(A(v))$ appears as the coefficient of $-\Delta v$ in the
Euler--Lagrange equation associated with $\mathcal E_k$. In particular, the Kirchhoff law in \eqref{1} corresponds to $k(s)=a+bs$, for which $\mathcal E_k=J$.
\end{remark}

The following corollary applies this comparison to minimizing sequences.

\begin{corollary}\label{cor:general-splitting}
Let $k$ be as in Theorem~\ref{thm:general-merging}.
Let $\mathcal A\subset S_\mu$ be nonempty, and set
\[
c:=\inf_{v\in\mathcal A}\mathcal E_k(v)\in\mathbb R.
\]
Suppose that a minimizing sequence
$\{v_n\}\subset\mathcal A$ admits nonnegative profiles
$\{U^j\}_{j\ge1}\subset H$ such that
\[
\sum_{j\ge1}\|U^j\|_2^2=\mu,
\quad
c=\frac12\widehat{k}(K)
+\frac12C_\infty-\frac1pP_\infty,
\]
where
\[
K:=\sum_{j\ge1}A(U^j)<+\infty,\quad
C_\infty:=\sum_{j\ge1}C(U^j)<+\infty,\quad
P_\infty:=\sum_{j\ge1}P(U^j)<+\infty.
\]
Let $W$ be the merged function associated with
$\{U^j\}_{j\ge1}$ as in Theorem~\ref{thm:general-merging}.
If every nonzero profile is positive almost everywhere
and $W\in\mathcal A$, then exactly one profile is nonzero.
\end{corollary}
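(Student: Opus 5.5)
We argue by contradiction, using Theorem~\ref{thm:general-merging}. The first task is to rule out the case where no profile is nonzero. Since $\sum_{j\ge1}\|U^j\|_2^2=\mu>0$, at least one profile is nonzero. So it suffices to assume that at least two profiles are nonzero and derive a contradiction.

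Suppose two distinct indices $j_1\neq j_2$ have $U^{j_1}\neq0$ and $U^{j_2}\neq0$. By hypothesis both are positive almost everywhere. The summability assumption of Theorem~\ref{thm:general-merging} follows from the finiteness of $\mu$, $K$, $C_\infty$ and $P_\infty$. The theorem therefore applies. It gives $W\in H$ with $\|W\|_2^2=\sum_j\|U^j\|_2^2=\mu$, so $W\in S_\mu$, consistent with the standing assumption $W\in\mathcal A$. It also gives the strict inequality \eqref{strict energy inequality}:
\[
\mathcal E_k(W)<\frac12\widehat k(K)+\frac12 C_\infty-\frac1p P_\infty = c .
\]

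Since $W\in\mathcal A$, the definition of $c$ as an infimum over $\mathcal A$ gives $\mathcal E_k(W)\ge c$. This contradicts the strict inequality above. Hence at most one profile is nonzero, and combined with the first step, exactly one is nonzero.

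There is no real obstacle here. The only point needing care is checking the hypotheses of Theorem~\ref{thm:general-merging}: nonnegativity of the profiles, the summability condition, and the requirement that ``at least two'' of the nonzero profiles be positive a.e. The last holds because every nonzero profile is assumed positive. The minimizing sequence $\{v_n\}$ enters only through the identity relating $c$ to $K$, $C_\infty$ and $P_\infty$, which is given as a hypothesis.
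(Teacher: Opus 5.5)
Your proposal is correct and follows the paper's proof: the total mass $\mu>0$ forces at least one nonzero profile, and two nonzero profiles would give, via Theorem~\ref{thm:general-merging} and $W\in\mathcal A$, the contradiction $c\le\mathcal E_k(W)<c$. Your explicit check that the summability hypothesis follows from the finiteness of $\mu$, $K$, $C_\infty$ and $P_\infty$ is a small detail the paper leaves implicit.
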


\begin{proof}
Since $\sum\limits_{j\ge1}\|U^j\|_2^2=\mu>0$, at least one
profile is nonzero.
If two or more profiles were nonzero, their positivity
and Theorem~\ref{thm:general-merging}, together with
$W\in\mathcal A$, would give
\[
c\le\mathcal E_k(W)
<
\frac12\widehat{k}(K)
+\frac12C_\infty-\frac1pP_\infty
=c,
\]
a contradiction.
Hence, exactly one profile is nonzero.
\end{proof}

\begin{remark}
The energy comparison in Theorem~\ref{thm:general-merging} holds for every continuous
positive Kirchhoff function $k$. Applying Corollary~\ref{cor:general-splitting} also
requires the stated mass and energy identities and
$W\in\mathcal{A}$. For $k(s)=a+bs$, these conditions are verified
in the proof of Theorem~\ref{thm:profile-merging}. For a general $k$, they must be
established separately.
\end{remark}

For the mountain-pass construction, we use the following
compactness result from
\cite[Proposition~2.1]{Jeanjean-2025},
applied with $N=3$ and $d=2$.
\begin{lemma}\label{partial-symmetry compactness}Let $\{u_n\}$ converge weakly to $u$ in $H^1(\mathbb R^3)$.
Assume that each $u_n$ is radially symmetric in $x_\perp$,
even in $x_3$, and nonincreasing in $|x_3|$.
Then
\[
u_n\to u\quad\text{strongly in }L^q(\mathbb R^3)
\quad\text{for every }2<q<6.
\]
\end{lemma}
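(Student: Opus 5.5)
The plan is to reduce the statement to Lions' vanishing lemma applied to $w_n:=u_n-u$. Recall that if $\{w_n\}$ is bounded in $H^1(\mathbb R^3)$ and
\[
\sup_{y\in\mathbb R^3}\int_{B_1(y)}|w_n|^2\,dx\to0,
\]
then $w_n\to0$ in $L^q(\mathbb R^3)$ for every $2<q<6$. Since $\{u_n\}$ converges weakly, it is bounded in $H^1(\mathbb R^3)$, so $\{w_n\}$ is bounded too. It therefore suffices to show that the local $L^2$ masses of $w_n$ tend to zero uniformly in the centre $y$.

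\textbf{Symmetry of the limit.} By Rellich's theorem on balls, after passing to a subsequence, $u_n\to u$ in $L^2_{\mathrm{loc}}$ and almost everywhere. Hence $u$ inherits all three properties, up to a null set:
\begin{itemize}
\item radial symmetry in $x_\perp$,
\item evenness in $x_3$,
\item monotonicity in $|x_3|$.
\end{itemize}
This is needed because we estimate $\int_{B_1(y)}w_n^2\le 2\int_{B_1(y)}u_n^2+2\int_{B_1(y)}u^2$, and the tail bounds below must hold for $u$ as well as for every $u_n$.

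\textbf{Uniform tail estimate.} The core step is the following bound: there is $C>0$ such that, for every $v$ in the symmetric class with $\|v\|_2^2\le M_0$,
\[
\int_{B_1(y)}v^2\,dx\le \frac{C M_0}{1+\max\{|y_\perp|,|y_3|\}}.
\]
We argue by cases on which component of $y$ is large.
\begin{itemize}
\item \emph{Case $|y_3|$ large.} Enclose $B_1(y)$ in the slab $\mathbb R^2\times[y_3-1,y_3+1]$, and assume $y_3>0$ by evenness. Since $v^2(x_\perp,\cdot)$ is nonincreasing in $|x_3|$ for a.e.\ $x_\perp$, each of the roughly $|y_3|/2$ disjoint translated slabs $\mathbb R^2\times[y_3-1-2k,\,y_3+1-2k]$ lying in $\{x_3\ge0\}$ carries at least as much $L^2$ mass as the original slab. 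Summing over these slabs gives the bound $\lesssim M_0/|y_3|$.
\item \emph{Case $|y_\perp|$ large.} Enclose $B_1(y)$ in $D_1(y_\perp)\times[y_3-1,y_3+1]$, where $D_1(y_\perp)$ is the unit disc in $\mathbb R^2$. Radial symmetry in $x_\perp$ lets us rotate this cylinder about the $x_3$-axis into about $c|y_\perp|$ pairwise disjoint congruent copies, each with the same $L^2$ mass. Summing again gives $\lesssim M_0/|y_\perp|$.
\end{itemize}

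\textbf{Conclusion.} Fix $\varepsilon>0$ and choose $R$ so large that the tail bound, applied to both $u_n$ and $u$, gives $\int_{B_1(y)}w_n^2<\varepsilon$ for all $|y|>R$ and all $n$. For $|y|\le R$, we have $B_1(y)\subset B_{R+1}$, so
\[
\sup_{|y|\le R}\int_{B_1(y)}w_n^2\le\|w_n\|_{L^2(B_{R+1})}^2\to0
\]
by Rellich's theorem. Thus $\sup_y\int_{B_1(y)}w_n^2\to0$, and Lions' lemma yields $u_n\to u$ in $L^q$ for all $2<q<6$. Because the limit $u$ is fixed, the subsequence argument upgrades to convergence of the whole sequence.

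I expect the main obstacle to be the case $|y_3|$ large. Monotonicity in $|x_3|$ holds only pointwise in $x_\perp$, so the comparison of slab masses must be carried out fibrewise and then integrated over $x_\perp$. One also has to check that this a.e.\ monotonicity passes to the limit $u$. The step works because the comparison uses only whole slabs $\mathbb R^2\times I$, so no control in $x_\perp$ is needed there.
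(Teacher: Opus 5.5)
Your proof is correct, and it necessarily takes a different route from the paper, which gives no argument of its own here: the lemma is imported from \cite[Proposition~2.1]{Jeanjean-2025} with $N=3$, $d=2$.

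You instead prove it directly. You reduce to Lions' vanishing lemma for $w_n=u_n-u$, and you obtain the uniform decay $\int_{B_1(y)}v^2\,dx\le C\|v\|_2^2/(1+|y|)$ on the symmetric class by a counting argument. In the transverse directions you use roughly $|y_\perp|$ disjoint rotated cylinders of equal mass. In the unconfined direction you use roughly $|y_3|/2$ slabs shifted toward $\{x_3=0\}$, each carrying at least the same mass by fibrewise monotonicity.

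This gives a self-contained, elementary proof that shows exactly which hypothesis blocks which loss of compactness. Evenness in $x_3$ alone would not stop mass escaping along $x_3$ (for example, two bumps moving to $\pm\infty$), which is why monotonicity in $|x_3|$ is needed. The citation, by contrast, keeps the paper short and covers the general setting of that reference.

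Two small remarks.
\begin{itemize}
\item The slab comparison uses that $v^2$, not only $v$, is nonincreasing in $|x_3|$, i.e.\ that $v\ge0$. This is automatic: for a.e.\ $x_\perp$ the function $v(x_\perp,\cdot)$ is monotone in $|x_3|$ and lies in $L^2(\mathbb R)$, so it decreases to $0$. You should say this explicitly.
\item Your step transferring the symmetry to the limit $u$ is superfluous. For the single function $u\in L^2(\mathbb R^3)$ one has $\sup_{|y|>R}\int_{B_1(y)}u^2\,dx\le\int_{|x|>R-1}u^2\,dx\to0$. So the uniform tail bound is only needed for the $u_n$, and no a.e.\ convergent subsequence is required.
\end{itemize}
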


For the mass-preserving scaling $t\star u$ introduced in Section 1,
a change of variables gives
\begin{equation}\label{eq:scaling-identities}
A(t\star u)=t^2A(u),\qquad
C(t\star u)=t^{-2}C(u),\qquad
P(t\star u)=t^{\frac{3p-6}{2}}P(u).
\end{equation}

Let $u\in S_\mu$ be a constrained critical point of $J$,
and let $\lambda$ be its Lagrange multiplier.
Set $M:=a+bA(u)$.
Testing the Euler--Lagrange equation with $u$ gives
\[
MA(u)+C(u)+\lambda\mu=P(u).
\]
Choose $\chi\in C_c^\infty(B_2)$ such that $\chi=1$ on $B_1$.
By local elliptic regularity, we may also test the equation
with $\chi(x/T)x\cdot\nabla u$.
The integrability of $|\nabla u|^2$,
$(1+|x_\perp|^2)|u|^2$ and $|u|^p$ ensures that the terms
involving derivatives of the cutoff vanish as $T\to\infty$.
Using $x\cdot\nabla(|x_\perp|^2)=2|x_\perp|^2$, we obtain
\[
\frac M2 A(u)+\frac52 C(u)+\frac{3\lambda}{2}\mu
=\frac3p P(u).
\]
Eliminating $\lambda$ from these two identities and recalling
that $M=a+bA(u)$, we obtain the Pohozaev identity
\begin{equation}\label{eq:Pohozaev-whole}
aA(u)+bA(u)^2
=
C(u)+\frac{3p-6}{2p}P(u).
\end{equation}

\section{Existence of a local minimizer}\label{Sec:Local}

In this section, we construct a positive local minimizer
of $J$ on $S_\mu$. We first establish the local minimizing
geometry and then prove compactness, up to translations
in the $x_3$-direction, for a suitable minimizing
Palais--Smale sequence.

Fix $\chi>0$ and define
\[
\mathfrak q_a(u):=aA(u)+C(u).
\]
For $r>0$, set
\[
\mathcal V_r:=\{u\in H:\mathfrak q_a(u)<r\},\quad
\overline{\mathcal V}_r:=\{u\in H:\mathfrak q_a(u)\leq r\}.
\]

\begin{lemma}\label{lem:local-geometry}
There exists $\mu_0=\mu_0(a,b,p,\chi)>0$ such that, for every $\mu\in(0,\mu_0)$,
\[
\inf_{u\in S_\mu\cap\mathcal V_{\frac{\chi}{4}}}J(u)
<\inf_{u\in S_\mu\cap(\mathcal V_\chi\setminus\mathcal V_{\frac{\chi}{2}})}J(u).
\]
\end{lemma}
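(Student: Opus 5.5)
Our plan is to compare an upper bound on the left-hand side, obtained from the explicit test family $\psi_{\mu,t}$ of \eqref{eq:test-family}, with a lower bound on the annulus $\mathcal V_\chi\setminus\mathcal V_{\frac{\chi}{2}}$ coming from the Gagliardo--Nirenberg inequality \eqref{eq:GN}. The point is that, for small $\mu$, the test functions have energy of order $\mu$, while functions on the annulus have energy of order $\chi$.

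\emph{Upper bound.} By \eqref{eq:test-A}--\eqref{eq:test-C} and Lemma~\ref{lem:vanishing-level}, we have
\[
\mathfrak q_a(\psi_{\mu,t})\to as_\mu+as_\mu+bs_\mu^2=2as_\mu+bs_\mu^2 \quad\text{as } t\to0^+.
\]
By \eqref{eq:s-d-asymptotics}, this limit is of order $2\sqrt a\,\mu$, so it is $<\chi/4$ once $\mu$ is small. Fixing such a $t$, we get $\psi_{\mu,t}\in S_\mu\cap\mathcal V_{\frac{\chi}{4}}$. Since $P\ge0$, it follows that
\[
J(\psi_{\mu,t})\le J_0(\psi_{\mu,t})\le d_\mu+o_t(1)\le 2\sqrt a\,\mu
\]
for $\mu$ small. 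Hence the left-hand side is at most $2\sqrt a\,\mu$.

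\emph{Lower bound.} Let $u\in S_\mu$ with $\chi/2\le\mathfrak q_a(u)<\chi$. Then $A(u)\le\chi/a$, and \eqref{eq:GN} gives
\[
P(u)\le C_p\mu^{\frac{6-p}{4}}A(u)^{\frac{3p-6}{4}}\le C_p\mu^{\frac{6-p}{4}}(\chi/a)^{\frac{3p-6}{4}}.
\]
The right-hand side tends to $0$ as $\mu\to0$, since $p<6$. Dropping the nonnegative $b$-term, we obtain
\[
J(u)\ge\tfrac12\mathfrak q_a(u)-\tfrac1pP(u)\ge\frac{\chi}{4}-\frac{C_p}{p}\mu^{\frac{6-p}{4}}(\chi/a)^{\frac{3p-6}{4}}.
\]
For $\mu$ small this exceeds $\chi/8$.

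\emph{Conclusion.} Choose $\mu_0$ so small that three conditions hold simultaneously:
\[
2as_\mu+bs_\mu^2<\chi/4,\qquad 2\sqrt a\,\mu<\chi/8,\qquad \tfrac{C_p}{p}\mu^{\frac{6-p}{4}}(\chi/a)^{\frac{3p-6}{4}}<\chi/8.
\]
All three tend to hold as $\mu\to0$ and involve only $a,b,p,\chi$. For $\mu<\mu_0$ this gives the strict inequality: the left-hand side is at most $2\sqrt a\,\mu<\chi/8$, while the right-hand side is at least $\chi/8$.

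The only mildly delicate step is making the upper bound uniform. Here $s_\mu/\mu\to1/\sqrt a$ controls $\mathfrak q_a$ of the test function in the limit $t\to0$, and $t$ may be chosen depending on $\mu$. The remaining estimates are direct.
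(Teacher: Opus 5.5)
Your proof is correct and follows the same scheme as the paper. In both, a Gagliardo--Nirenberg bound gives $J\ge\chi/8$ on $S_\mu\cap(\mathcal V_\chi\setminus\mathcal V_{\frac{\chi}{2}})$, and this is compared with a test function in $S_\mu\cap\mathcal V_{\frac{\chi}{4}}$ whose energy is $O(\mu)$.

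The differences are minor:
\begin{itemize}
\item The paper uses the fixed profile $\sqrt\mu\,\zeta$ as test function, so it needs no asymptotics of $s_\mu$ or $d_\mu$.
\item The paper writes the Gagliardo--Nirenberg step multiplicatively, as $\frac1pP(u)\le\frac14\mathfrak q_a(u)$ on all of $\mathcal V_\chi$, and reuses this later to show $c_\mu>0$.
\item Your upper bound tacitly uses a fourth smallness condition, $d_\mu<2\sqrt a\,\mu$, which is missing from your list of three. It holds for small $\mu$ by \eqref{eq:s-d-asymptotics}, so this is only a bookkeeping omission.
\end{itemize}
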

\begin{proof}
Fix $0\leq\zeta\in C_c^\infty(\mathbb R^3)$ with $\|\zeta\|_2=1$, and set
\[
\mu_0:=\min\left\{\left(\frac{p\,a^{\frac{3p-6}{4}}}
{4C_p\,\chi^{\frac{3p-10}{4}}}\right)^{\frac{4}{6-p}},\frac{\chi}{8\mathfrak q_a(\zeta)},
\left(\frac{\chi}{4bA(\zeta)^2}\right)^{\frac12}\right\}.
\]
Let $\mu\in(0,\mu_0)$ and $u\in S_\mu\cap\mathcal V_\chi$. Since $A(u)\leq\frac{\mathfrak q_a(u)}{a}<\frac{\chi}{a}$, using \eqref{eq:GN}, we have
\[
\frac{1}{p}P(u)\leq\frac{C_p}{p}\mu^{\frac{6-p}{4}}
\chi^{\frac{3p-10}{4}}a^{\frac{6-3p}{4}}\,\mathfrak q_a(u)\leq\frac14\mathfrak q_a(u).
\]
Consequently,
\begin{equation}\label{eq:local-lower}
J(u)\geq\frac14\mathfrak q_a(u)+\frac{b}{4}A(u)^2,
\end{equation}
and hence, for every $u\in S_\mu\cap(\mathcal V_\chi\setminus\mathcal V_{\frac{\chi}{2}})$, there holds $J(u)\geq\frac{\chi}{8}$.

Now let $w_\mu:=\sqrt{\mu}\,\zeta$. Then $w_\mu\in S_\mu$ and
\[
\mathfrak q_a(w_\mu)=\mu\mathfrak q_a(\zeta)<\frac{\chi}{8}<\frac{\chi}{4}.
\]
Moreover, we have
\[
J(w_\mu)\leq\frac{\mu}{2}\mathfrak q_a(\zeta)+\frac{b\mu^2}{4}A(\zeta)^2<\frac{\chi}{16}+\frac{\chi}{16}=\frac{\chi}{8}.
\]
Therefore, it follows that
\[
\inf_{u\in S_\mu\cap\mathcal V_{\frac{\chi}{4}}}J(u)\leq J(w_\mu)
<\frac{\chi}{8}\leq\inf_{u\in S_\mu\cap(\mathcal V_\chi\setminus\mathcal V_{\frac{\chi}{2}})}J(u).
\]
\end{proof}

For $\chi>0$ and $\mu\in(0,\mu_0)$, we consider the following minimization problem
\begin{equation*}
c_\mu:=\inf_{u\in S_\mu\cap\mathcal{V}_\chi}J(u).
\end{equation*}
Let
\[
\mu_1:=\min\left\{\mu_0,\,\frac{\sqrt a}{2b}\left(\sqrt{4a^2+\frac{b\chi}{2}}-2a\right)\right\}.
\]
\begin{lemma}\label{lem:below-threshold}
For every $\mu\in(0,\mu_1)$, we have $0<c_\mu<d_\mu$.
\end{lemma}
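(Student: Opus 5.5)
Positivity and the upper bound are handled separately.

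\emph{Positivity.} For every $u\in S_\mu\cap\mathcal V_\chi$ with $\mu<\mu_1\le\mu_0$, the estimate \eqref{eq:local-lower} from the proof of Lemma~\ref{lem:local-geometry} gives $J(u)\ge\frac14\mathfrak q_a(u)$. By \eqref{eq:spectral-ineq} with $M=a$, we have $\mathfrak q_a(u)\ge2\sqrt a\,\mu$. Hence $J(u)\ge\frac{\sqrt a}{2}\mu$, and so $c_\mu\ge\frac{\sqrt a}{2}\mu>0$.

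\emph{Upper bound.} We test with the family $\psi_{\mu,t}$ from \eqref{eq:test-family}. Since $J(\psi_{\mu,t})=J_0(\psi_{\mu,t})-\frac1pP(\psi_{\mu,t})$, Lemma~\ref{lem:vanishing-level} shows $J_0(\psi_{\mu,t})\to d_\mu$ as $t\to0^+$. A limit alone does not give the strict inequality $c_\mu<d_\mu$, so we must track rates. From \eqref{eq:test-A} and \eqref{eq:test-C},
\[
J_0(\psi_{\mu,t})=d_\mu+\Big(\frac a2+\frac b2 s_\mu\Big)\mu t^2\|\eta'\|_2^2+\frac b4\mu^2t^4\|\eta'\|_2^4 .
\]
Here we used $f_\mu(s_\mu)=d_\mu$ and that the $C$ term is independent of $t$. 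The increment is therefore $O(t^2)$. By \eqref{eq:test-P}, the nonlinear term is a positive constant times $t^{\frac{p-2}{2}}$. Since $\frac{p-2}{2}<2$ (indeed $p<6$), this term dominates for small $t$. Thus there is $t_0>0$ with $J(\psi_{\mu,t})<d_\mu$ for all $t\in(0,t_0)$.

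\emph{Admissibility.} It remains to check that $\psi_{\mu,t}\in\mathcal V_\chi$ for small $t$. As $t\to0^+$,
\[
\mathfrak q_a(\psi_{\mu,t})\to as_\mu+C(\psi_{\mu,t})=2as_\mu+bs_\mu^2 .
\]
So we need $2as_\mu+bs_\mu^2<\chi$ strictly. The only input is the condition $\mu<\mu_1$, and this is the step to verify carefully. From \eqref{eq:s-mu-equation} we have $as_\mu^2\le\mu^2$, so $s_\mu\le\mu/\sqrt a$. Hence
\[
2as_\mu+bs_\mu^2\le 2\sqrt a\,\mu+\frac{b}{a}\mu^2 .
\]
The right-hand side is less than $\chi/2$ exactly when $b\mu^2+2a^{3/2}\mu-\frac{a\chi}{2}<0$. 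This holds precisely when
\[
\mu<\frac{-2a^{3/2}+\sqrt{4a^3+2ab\chi}}{2b}=\frac{\sqrt a}{2b}\Big(\sqrt{4a^2+2b\chi}-2a\Big),
\]
which is implied by the (more restrictive) definition of $\mu_1$, since $\frac{b\chi}{2}<2b\chi$. Therefore $\mathfrak q_a(\psi_{\mu,t})<\chi$ for all sufficiently small $t$. Choosing such a $t$ below $t_0$ gives $c_\mu\le J(\psi_{\mu,t})<d_\mu$.
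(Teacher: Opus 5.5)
Your proposal is correct and follows the paper's proof. Positivity comes from \eqref{eq:local-lower} combined with \eqref{eq:spectral-ineq} at $M=a$, and the strict bound $c_\mu<d_\mu$ comes from the test family $\psi_{\mu,t}$, where the $t^{\frac{p-2}{2}}$ nonlinear gain beats the $O(t^2)$ excess of $J_0$ over $d_\mu$. The only difference is minor: the paper's $\mu_1$ actually yields $2as_\mu+bs_\mu^2<\chi/8$ (so $\psi_{\mu,t}\in\mathcal V_{\chi/4}$), whereas you check the weaker bound $<\chi/2$, which is all this lemma needs.
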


\begin{proof}
Since $\mu_1\leq\mu_0$, using \eqref{eq:spectral-ineq} with $M=a$ and \eqref{eq:local-lower}, we have
\[
J(u)\geq\frac14\mathfrak q_a(u)\geq\frac{\sqrt a}{2}\mu\quad\text{for }u\in S_\mu\cap\mathcal V_\chi,
\]
thus $c_\mu>0$.

Because $as_\mu^2\leq\mu^2$, the definition of $\mu_1$ gives
\[
2as_\mu+bs_\mu^2\leq2\sqrt a\,\mu+\frac{b}{a}\mu^2<\frac{\chi}{8}.
\]
Therefore, by \eqref{eq:test-A} and \eqref{eq:test-C}, one can show that
\[
\mathfrak q_a(\psi_{\mu,t})=2as_\mu+bs_\mu^2
+a\mu t^2\int_{\mathbb R}|\eta'|^2\,dx_3<\frac{\chi}{4}
\]
for all sufficiently small $t>0$.

Finally, using \eqref{eq:test-A}--\eqref{eq:d-mu-formula}, we get
\begin{align*}
J(\psi_{\mu,t})-d_\mu
&=\frac{\mu}{2}(a+b s_\mu)t^2\int_{\mathbb{R}}|\eta'|^2\,dx_3
+\frac{b\mu^2}{4}t^4\left(\int_{\mathbb{R}}|\eta'|^2\,dx_3\right)^2\notag\\
&\quad-\frac{\mu}{p}s_\mu^{\frac{p-2}{2}}t^{\frac{p-2}{2}}
\left(\int_{\mathbb{R}^{2}}|\Phi_1|^p\,dx_\perp\right)
\left(\int_{\mathbb{R}}|\eta|^p\,dx_3\right).
\end{align*}
Since $\frac{p-2}{2}<2$, the last term dominates as $t\to0^+$. Hence, $J(\psi_{\mu,t})<d_\mu$ for all sufficiently small $t$, and thus $c_\mu<d_\mu$.
\end{proof}

We now have the following nonvanishing property for minimizing sequences.
\begin{lemma}\label{lem:local-nonvanishing}
Let $\mu\in(0,\mu_1)$ and $\{u_n\}\subset S_\mu\cap\mathcal{V}_\chi$ satisfy $J(u_n)\to c_\mu$. Then
\begin{equation*}
\liminf_{n\to\infty}P(u_n)>0.
\end{equation*}
\end{lemma}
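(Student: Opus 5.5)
The argument is by contradiction. Suppose that, along a subsequence, $P(u_n)\to0$. Then
\[
J(u_n)=J_0(u_n)-\frac1pP(u_n)=J_0(u_n)+o_n(1).
\]
Since $u_n\in S_\mu$, Lemma~\ref{lem:vanishing-level} gives $J_0(u_n)\ge d_\mu$ for every $n$. The lower bound there comes from the partial uncertainty inequality \eqref{eq:partial-uncertainty}, which yields $C(u_n)\ge\mu^2/A(u_n)$ and hence $J_0(u_n)\ge f_\mu(A(u_n))\ge d_\mu$.

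Passing to the limit, we obtain
\[
c_\mu=\lim_{n\to\infty}J(u_n)\ge\liminf_{n\to\infty}J_0(u_n)\ge d_\mu.
\]
This contradicts Lemma~\ref{lem:below-threshold}, which gives $c_\mu<d_\mu$ for every $\mu\in(0,\mu_1)$. Hence no subsequence can have $P(u_n)\to0$, so $\liminf_{n\to\infty}P(u_n)>0$.

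The only point that needs care is that the whole sequence, and not just a subsequence, has a positive $\liminf$. This is handled by the subsequence formulation: if the $\liminf$ were zero, we could extract a subsequence along which $P(u_n)\to0$, and the argument above applies to it. Boundedness of $\{u_n\}$ is not needed, since $d_\mu$ bounds $J_0$ from below on all of $S_\mu$. The genuine work, namely the strict inequality $c_\mu<d_\mu$, was already done in Lemma~\ref{lem:below-threshold}.
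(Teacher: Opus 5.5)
Your proposal is correct and matches the paper's proof: assume $P(u_n)\to0$ along a subsequence, use $J_0\ge d_\mu$ on $S_\mu$ from Lemma~\ref{lem:vanishing-level} to get $\liminf J(u_n)\ge d_\mu$, and contradict $c_\mu<d_\mu$ from Lemma~\ref{lem:below-threshold}. Your handling of the subsequence and your remark that boundedness is not needed are both accurate.
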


\begin{proof}
Suppose, after passing to a subsequence, that $P(u_n)\to0$. By Lemma \ref{lem:vanishing-level},
we have
\begin{equation*}
J(u_n)=J_0(u_n)-\frac1pP(u_n)\geq d_\mu-o(1),
\end{equation*}
contradicting $c_\mu<d_\mu$.
\end{proof}

The preceding estimates exclude vanishing at the level $c_\mu$.
We now rule out profile splitting and prove strong convergence,
up to translations in the $x_3$-direction, for suitable
minimizing Palais--Smale sequences.

\begin{theorem}\label{thm:profile-merging}
Let $\mu\in(0,\mu_1)$. Suppose that $\{u_n\}\subset S_\mu$ is a Palais--Smale sequence for $J|_{S_\mu}$ at level $c_\mu$ and satisfies
\begin{equation}\label{eq:profile-principle-well}
\limsup_{n\to\infty}\mathfrak q_a(u_n)\leq\frac{\chi}{2},\quad\|u_n^-\|_H\to0.
\end{equation}
Then, after passing to a subsequence,  there exist $z_n\in\mathbb{R}$ and a positive function $U\in S_\mu$ such that
\begin{equation*}
\tau_{-z_n}u_n\to U\quad\text{in }H.
\end{equation*}
Moreover, we have
\begin{equation*}
J(U)=c_\mu,\quad\mathfrak{q}_a(U)\leq\frac{\chi}{2}.
\end{equation*}
\end{theorem}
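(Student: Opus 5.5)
We start from boundedness and the Lagrange multipliers. Since $\limsup_n\mathfrak q_a(u_n)\le\chi/2$ and $\|u_n\|_2^2=\mu$, the sequence $\{u_n\}$ is bounded in $H$. We pass to a subsequence with
\[
A(u_n)\to K,\qquad C(u_n)\to C_\infty,\qquad P(u_n)\to P_\infty .
\]
By Lemma~\ref{lem:local-nonvanishing}, $P_\infty>0$. The Palais--Smale property gives multipliers
\[
\lambda_n=\mu^{-1}\bigl(P(u_n)-(a+bA(u_n))A(u_n)-C(u_n)\bigr),
\]
which are bounded. We may assume $\lambda_n\to\lambda$. Set $M:=a+bK$. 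Then
\[
-M\Delta u_n+|x_\perp|^2u_n+\lambda u_n-|u_n|^{p-2}u_n\to0\quad\text{in }H^{-1}.
\]
This step freezes the Kirchhoff coefficient at the limiting kinetic energy.

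Next we apply Lemma~\ref{lem:profile} to $\{u_n\}$. By \eqref{eq:shifted-weak}, each profile $U^j$ is a weak limit of $\tau_{-z_n^j}u_n$. Hence $U^j$ is a weak solution of
\[
-M\Delta U+|x_\perp|^2U+\lambda U=|U|^{p-2}U.
\]
Since $\|u_n^-\|_H\to0$, every $U^j\ge0$. By the strong maximum principle, each nonzero profile is positive. Because $P_\infty>0$ and the remainder vanishes in $L^p$ by \eqref{eq:remainder-Lq}, at least one profile is nonzero. Lemma~\ref{lem:strict-edge} then gives $\lambda>-2\sqrt M$.

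The key step is to kill the remainder using this spectral gap. By \eqref{eq:spectral-ineq} applied with $M$, there is $\delta>0$ such that
\[
MA(r)+C(r)+\lambda\|r\|_2^2\ \ge\ \delta\|r\|_H^2 .
\]
Indeed, if $\lambda\ge0$ this is immediate. If $\lambda<0$, combine $\bigl(1-\frac{|\lambda|}{2\sqrt M}\bigr)\bigl(MA(r)+C(r)\bigr)$ with the uncertainty bound.

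Now test the approximate equation with $u_n$, and test each profile equation with $U^j$. Subtract, using the splitting in Lemma~\ref{lem:profile}(iv) and the $L^p$ smallness of $r_n^\ell$. This yields
\[
\lim_{\ell\to\infty}\limsup_{n\to\infty}\bigl(MA(r_n^\ell)+C(r_n^\ell)+\lambda\|r_n^\ell\|_2^2\bigr)=0 .
\]
Hence $\|r_n^\ell\|_H\to0$ in the iterated limit. I expect this to be the main obstacle. The difficulty is bookkeeping the iterated limits in $\ell$ and $n$ while using the cross-term orthogonality of (iii). Once it is done, we obtain
\[
\sum_j\|U^j\|_2^2=\mu,\qquad \sum_jA(U^j)=K,\qquad \sum_jC(U^j)=C_\infty,\qquad \sum_jP(U^j)=P_\infty .
\]
In particular
\[
c_\mu=\frac a2K+\frac b4K^2+\frac12C_\infty-\frac1pP_\infty .
\]

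Finally we merge the profiles via Corollary~\ref{cor:general-splitting} with $k(s)=a+bs$ and $\mathcal A=S_\mu\cap\mathcal V_\chi$. By \eqref{eq:merging-properties}, $W\in S_\mu$ and
\[
\mathfrak q_a(W)\le aK+C_\infty=\lim_n\mathfrak q_a(u_n)\le\chi/2<\chi ,
\]
so $W\in\mathcal A$. Hence exactly one profile $U=U^1$ is nonzero. With $z_n=z_n^1$, the convergence $\tau_{-z_n}u_n\rightharpoonup U$ holds together with convergence of norms ($\|\cdot\|_2$, $A$ and $C$ all converge). Therefore $\tau_{-z_n}u_n\to U$ in $H$. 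By continuity, $J(U)=c_\mu$ and $\mathfrak q_a(U)\le\chi/2$, and $U>0$ as shown above.
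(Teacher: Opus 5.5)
Your proposal is correct and follows the paper's proof essentially step by step: you freeze the Kirchhoff coefficient at $M=a+bK$, pass each profile to the limiting equation, and apply Lemma~\ref{lem:strict-edge} to get $\lambda>-2\sqrt{M}$; you then kill the remainder via \eqref{eq:spectral-ineq} and the splitting identities of Lemma~\ref{lem:profile}(iv), and exclude multiple profiles with Corollary~\ref{cor:general-splitting} on $\mathcal A=S_\mu\cap\mathcal V_\chi$. The only differences are cosmetic. You derive full $H$-coercivity of $MA(r)+C(r)+\lambda\|r\|_2^2$ in one step and finish via weak convergence plus convergence of norms, whereas the paper first obtains $\lim_{\ell\to\infty}\limsup_{n\to\infty}\|r_n^\ell\|_2=0$ and concludes with the $\ell=1$ splitting.
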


\begin{proof}
Since $\{u_n\}\subset S_\mu$ is a Palais--Smale sequence for $J$, we have
\begin{equation}\label{eq:PS-residual}
-\left(a+bA(u_n)\right)\Delta u_n+|x_\perp|^2u_n+\lambda_nu_n-|u_n|^{p-2}u_n
\to0\quad\text{in }H^*,
\end{equation}
where
\begin{equation*}
\lambda_n:=-\frac{1}{\mu}\left\langle J'(u_n),u_n\right\rangle
=\frac{1}{\mu}\left(P(u_n)-aA(u_n)-bA(u_n)^2-C(u_n)\right).
\end{equation*}
The boundedness of $\{u_n\}$ in $H$ then implies that $\{\lambda_n\}$ is bounded. Passing to a subsequence, we may assume that
\begin{equation*}
A(u_n)\to K_\infty\ge0,\quad
C(u_n)\to C_\infty,\quad
P(u_n)\to P_\infty,\quad
\lambda_n\to\lambda_\infty\in\R,
\end{equation*}
and set
\[
M_\infty:=a+bK_\infty>0.
\]
Using \eqref{eq:profile-principle-well}, we obtain $u_n\in S_\mu\cap\mathcal V_\chi$ for all sufficiently large $n$. From Lemma~\ref{lem:local-nonvanishing}, we have
\[
\liminf_{n\to\infty}P(u_n)>0.
\]
Then, we apply the profile decomposition in Lemma~\ref{lem:profile}. If all profiles were zero, \eqref{eq:remainder-Lq} with $q=p$ would imply $P(u_n)\to0$, a contradiction. Hence, at least one profile is nonzero.

For every nonzero profile $U^j$, testing \eqref{eq:PS-residual} with $\tau_{z_n^j}\varphi$, using \eqref{eq:shifted-weak} and the local compactness of the Sobolev embedding, we obtain
\begin{equation}\label{eq:profile-Euler}
-M_\infty\Delta U^j+|x_\perp|^2U^j+\lambda_\infty U^j=|U^j|^{p-2}U^j
\quad\text{in }\mathbb R^3.
\end{equation}
Since $\|u_n^-\|_H\to0$, $U^j\geq0$. By the strong maximum principle, every nonzero profile is positive. Then, applying Lemma~\ref{lem:strict-edge} to \eqref{eq:profile-Euler}, we can obtain $\lambda_\infty>-2\sqrt{M_\infty}$.

We next show that the profile remainder vanishes. For fixed $\ell\geq1$, testing \eqref{eq:PS-residual} with $u_n$ and  \eqref{eq:profile-Euler} with $U^j$ ($1\leq j\leq\ell$), and using the splittings in Lemma~\ref{lem:profile}, we get
\begin{equation}\label{eq:remainer}
M_\infty A(r_n^\ell)+C(r_n^\ell)+\lambda_\infty\|r_n^\ell\|_2^2-P(r_n^\ell)=o_n(1).
\end{equation}
Then, by \eqref{eq:spectral-ineq}, we can see that
\[
(2\sqrt{M_\infty}+\lambda_\infty)\|r_n^\ell\|_2^2\leq M_\infty A(r_n^\ell)+C(r_n^\ell)
+\lambda_\infty\|r_n^\ell\|_2^2=P(r_n^\ell)+o_n(1).
\]
From \eqref{eq:remainder-Lq}, it follows
\[
\lim_{\ell\to\infty}\limsup_{n\to\infty}\|r_n^\ell\|_2^2=0.
\]
Using \eqref{eq:remainer}, we obtain
\[
\lim_{\ell\to\infty}\limsup_{n\to\infty}\bigl(A(r_n^\ell)+C(r_n^\ell)\bigr)=0.
\]
Consequently, 
\begin{equation}\label{eq:local-full-recovery}
\mu=\sum_{j\geq1}\|U^j\|_2^2,\quad
K_\infty=\sum_{j\geq1}A(U^j),\quad
C_\infty=\sum_{j\geq1}C(U^j),\quad
P_\infty=\sum_{j\geq1}P(U^j).
\end{equation}

We now exclude splitting. Let $\mathcal A:=S_\mu\cap\mathcal V_\chi$ and $k(s):=a+bs$.
Then, one can take $\mathcal E_k=J$ and $\widehat k(s)=as+\frac{b}{2}s^2$. Since $u_n\in\mathcal A$ for all sufficiently large $n$, using \eqref{eq:local-full-recovery}, we have
\[
c_\mu=\frac{1}{2}\widehat k(K_\infty)+\frac{1}{2}C_\infty-\frac{1}{p}P_\infty.
\]
Let $W$ be the merged function associated with
the profiles $\{U^j\}_{j\ge1}$.
By Theorem~\ref{thm:general-merging} and \eqref{eq:local-full-recovery}, we obtain
\[
\|W\|_2^2=\mu,\quad
\mathfrak q_a(W)\leq aK_\infty+C_\infty=\lim_{n\to\infty}\mathfrak q_a(u_n)\leq\frac{\chi}{2}.
\]
Hence, $W\in\mathcal A$. Applying Corollary~\ref{cor:general-splitting}, we conclude that exactly one profile is nonzero.

Relabeling the profiles, we may assume that the unique nonzero profile is $U^1$, and we write $U=U^1$ and $z_n=z_n^1$. By \eqref{eq:local-full-recovery}, we have
$\|U\|_2^2=\mu$, $A(U)=K_\infty$ and $C(U)=C_\infty$.
Taking $\ell=1$ in the splitting identities of Lemma~\ref{lem:profile}, one can see that
\[
\|r_n^1\|_2^2+A(r_n^1)+C(r_n^1)\to0.
\]
Since $u_n=\tau_{z_n}U+r_n^1$ and translations in the $x_3$ direction preserve the $H$-norm, we conclude that
\[
\tau_{-z_n}u_n\to U\quad\text{in }H.
\]
Consequently, we obtain
\[
J(U)=c_\mu,\quad
\mathfrak q_a(U)=\lim_{n\to\infty}\mathfrak q_a(u_n)\leq\frac{\chi}{2}.
\]
\end{proof}

We can now obtain a positive local minimizer by combining
Ekeland's variational principle with the preceding
compactness result.
\begin{proposition}\label{prop:local-minimizer}
For every $\mu\in(0,\mu_1)$, there exists a positive constrained local minimizer $u_1\in S_\mu$ such that
\[J(u_1)=c_\mu<d_\mu.\]
\end{proposition}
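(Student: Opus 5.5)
The plan is to produce a minimizing sequence that is also a Palais--Smale sequence with the properties \eqref{eq:profile-principle-well}, and then invoke Theorem~\ref{thm:profile-merging}.

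First, take a minimizing sequence $\{v_n\}\subset S_\mu\cap\mathcal V_\chi$ with $J(v_n)\to c_\mu$ and replace $v_n$ by $|v_n|$. This is legitimate because $A(|v|)\le A(v)$, $C(|v|)=C(v)$ and $P(|v|)=P(v)$, so $|v_n|\in S_\mu\cap\mathcal V_\chi$ and $J(|v_n|)\le J(v_n)$. By Lemma~\ref{lem:local-geometry}, $c_\mu\le\inf_{S_\mu\cap\mathcal V_{\chi/4}}J<\inf_{S_\mu\cap(\mathcal V_\chi\setminus\mathcal V_{\chi/2})}J$. Hence for $n$ large the $v_n$ lie in $\mathcal V_{\chi/2}$, and in fact there is a gap $\delta>0$ with $\mathfrak q_a(v_n)<\chi/2$ whenever $J(v_n)<c_\mu+\delta$.

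Next, apply Ekeland's variational principle on the complete metric space $S_\mu\cap\overline{\mathcal V}_\chi$, which is closed in $H$ since $\mathfrak q_a$ is continuous. This gives $u_n$ with
\[
J(u_n)\le J(v_n),\qquad \|u_n-v_n\|_H\le\varepsilon_n^{1/2},
\]
and $u_n$ minimizes $J+\varepsilon_n^{1/2}\|\cdot-u_n\|_H$ on that set, where $\varepsilon_n:=J(v_n)-c_\mu\to0$.

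Since $J(u_n)\to c_\mu$, the gap above places $u_n$ in the interior region $\mathcal V_{\chi/2}$, strictly away from the boundary $\mathfrak q_a=\chi$. Constrained variations along $S_\mu$ therefore stay admissible, which gives $\|(J|_{S_\mu})'(u_n)\|\le\varepsilon_n^{1/2}\to0$. Thus $\{u_n\}$ is a Palais--Smale sequence at level $c_\mu$ with $\limsup\mathfrak q_a(u_n)\le\chi/2$. Moreover $\|u_n^-\|_H\le\|u_n-v_n\|_H\to0$, because $v_n\ge0$ and $w\mapsto w^-$ satisfies $\|u^- -v^-\|_H\le\|u-v\|_H$ for this norm (the gradient term uses $|\nabla (u^-)|\le|\nabla(u-v)|$ a.e. where relevant). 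Here we must be a little careful: for $v\ge0$ we have $u^-\le (u-v)^-$ pointwise, and the gradient estimate follows on $\{u<0\}$. If this pointwise argument is awkward, the fallback is to use that $\mathrm{dist}_H(u_n,\{w\ge0\})\to0$ directly.

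Finally, Theorem~\ref{thm:profile-merging} yields $z_n$ and a positive $U\in S_\mu$ with $\tau_{-z_n}u_n\to U$ in $H$, $J(U)=c_\mu$ and $\mathfrak q_a(U)\le\chi/2<\chi$. Set $u_1:=U$. It lies in the open set $S_\mu\cap\mathcal V_\chi$ and attains the infimum of $J$ there, so it is a constrained local minimizer. Lemma~\ref{lem:below-threshold} gives $c_\mu<d_\mu$.

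The main obstacle is the Ekeland step: checking that the perturbed minimizer is interior, so that the constrained derivative estimate holds. This rests on the strict gap from Lemma~\ref{lem:local-geometry} applied with the small energy excess $\varepsilon_n$.
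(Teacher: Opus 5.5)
Your overall route is the paper's: a nonnegative minimizing sequence, Ekeland's principle on a closed sublevel set of $\mathfrak q_a$, the gap from Lemma~\ref{lem:local-geometry} to keep the Ekeland points off the boundary so that they form a Palais--Smale sequence, and then Theorem~\ref{thm:profile-merging}. There is a genuine gap, however, in how you verify the hypothesis $\|u_n^-\|_H\to0$.

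\textbf{The inequality you use is false.} The claim $\|u^-\|_H\le\|u-v\|_H$ for $v\ge0$ does not hold. The pointwise bound $0\le u^-\le|u-v|$ controls the $L^2$ and potential parts. On $\{u<0\}$, however, $\nabla v$ need not vanish, so $|\nabla u|\le|\nabla(u-v)|$ can fail there.

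For a counterexample, take $0\le\zeta\le1$ and $0\le w_0$ in $C_c^\infty(\R^3)$ with $w_0=1$ on $\operatorname{supp}\zeta$. Set $\phi_\delta:=\frac{\delta}{2}\zeta\,(1+\sin(x_1/\delta))$, $u_\delta:=-\phi_\delta$ and $v_\delta:=\delta w_0-\phi_\delta\ge0$. Then $\|u_\delta-v_\delta\|_H=\delta\|w_0\|_H\to0$, while $A(u_\delta^-)=A(\phi_\delta)\to\frac18\|\zeta\|_2^2>0$. The same example rules out your fallback: $\operatorname{dist}_H(u_n,\{w\ge0\})\to0$ gives no control of $\nabla u_n^-$.

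\textbf{The missing ingredient is the Palais--Smale information itself}, which is what the paper uses. From $0\le u_n^-\le|u_n-v_n|$ you get $\|u_n^-\|_2\to0$. Since $A(u_n^-)\le A(u_n)$ is bounded, \eqref{eq:GN} gives $\|u_n^-\|_p\to0$. The sequence $\{u_n^-\}$ is bounded in $H$ and the multipliers $\lambda_n$ are bounded. Testing $J'(u_n)+\lambda_nu_n\to0$ in $H^*$ against $u_n^-$ therefore gives
\[
(a+bA(u_n))A(u_n^-)+C(u_n^-)=-\lambda_n\|u_n^-\|_2^2+\|u_n^-\|_p^p+o(1)\to0,
\]
and hence $\|u_n^-\|_H\to0$. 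Alternatively, you could observe that the proof of Theorem~\ref{thm:profile-merging} uses this hypothesis only to obtain $U^j\ge0$, for which $\|u_n^-\|_2\to0$ already suffices. In that case, though, you are no longer applying the theorem as stated.

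\textbf{A smaller point.} Your Ekeland points lie in $\overline{\mathcal V}_\chi$ and could a priori satisfy $\mathfrak q_a(u_n)=\chi$. Lemma~\ref{lem:local-geometry} as stated does not cover that case. Either fix works: the bound \eqref{eq:local-lower} extends to $\mathfrak q_a(u)\le\chi$ by the same proof, or you can work on $S_\mu\cap\overline{\mathcal V}_{3\chi/4}$ as the paper does and use $\|u_n-v_n\|_H\to0$ to get interiority.
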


\begin{proof}By Lemma~\ref{lem:local-geometry}, we have \[
 c_\mu
 =\inf_{u\in S_\mu\cap\overline{\mathcal V_{\frac{\chi}{2}}}}\, J(u)
 =\inf_{u\in S_\mu\cap\overline{\mathcal V_{\frac{3\chi}{4}}}}\, J(u).
\]
Because $J(|u|)=J(u)$, we may choose a nonnegative minimizing sequence
$\{v_n\}\subset S_\mu\cap\overline{\mathcal V_{\frac{\chi}{2}}}$ for $c_\mu$. Applying Ekeland's variational principle to $J$ on $S_\mu\cap\overline{\mathcal V_{\frac{3\chi}{4}}}$, we obtain another minimizing sequence $\{u_n\}$ for $c_\mu$ satisfying $\|u_n-v_n\|_H\to0$.
Consequently, $\{u_n\}$ is a Palais--Smale sequence for $J|_{S_\mu}$ at level $c_\mu$.

Moreover, since $v_n\ge 0$ and $\|u_n-v_n\|_H\to0$, it follows that $\|u_n^-\|_2\le \|u_n-v_n\|_2\to0$.
Hence, the sequence $\{u_n^-\}$ is bounded in $H$. By the Gagliardo--Nirenberg inequality \eqref{eq:GN}, $\|u_n^-\|_p\to0$.
Since $\{u_n\}$ is a bounded constrained Palais--Smale sequence, there exist bounded multipliers $\{\lambda_n\}$ such that
\[
 J'(u_n)+\lambda_n u_n\to0\quad\text{in }H^*.
\]
Testing this relation against \(u_n^-\) yields
\[
 (a+bA(u_n))A(u_n^-)+C(u_n^-)
 +\lambda_n\|u_n^-\|_2^2
 =\|u_n^-\|_p^p+o(1).
\]
Consequently, $aA(u_n^-)+C(u_n^-)\to0$. Since
$\|u_n^-\|_2\to0$, we conclude that $\|u_n^-\|_H\to0$.
Together with $\limsup\limits_{n\to\infty}\mathfrak q_a(u_n)\le\frac{\chi}2$,
this allows us to apply Theorem~\ref{thm:profile-merging}.
Therefore, after passing to a subsequence, there exist
a sequence $\{z_n\}\subset\mathbb R$ and a positive function
$u_1\in S_\mu$ such that the translates of $u_n$ in the
$x_3$-direction satisfy
\[
\tau_{-z_n}u_n\to u_1
\quad\text{in }H.
\]
By the continuity of $J$ and $\mathfrak q_a$ and their invariance
under these translations, we obtain
\[
J(u_1)=c_\mu,\quad \mathfrak q_a(u_1)\le\frac{\chi}{2}<\chi.
\]
Therefore, \(u_1\) lies in the relative interior of \(S_\mu\cap\mathcal V_\chi\) with respect to \(S_\mu\). Since it attains the infimum of \(J\) on this set, \(u_1\) is a constrained local minimizer of \(J\) on \(S_\mu\).
\end{proof}

\section{Existence of a mountain-pass solution}\label{Sec:Mountain}

In this section, we construct a positive mountain-pass
solution on $S_\mu$ by passing to the limit from solutions
on expanding balls. We begin with the mountain-pass
geometry of $J|_{S_\mu}$.

We define
\[
\mathcal{G}
:=
\left\{
u\in H:
aA(u)+bA(u)^2>\frac{3p-6}{2p}P(u)
\right\},
\qquad
\mathcal{G}_\mu:=\mathcal{G}\cap S_\mu.
\]
By continuity and the scaling identities
\eqref{eq:scaling-identities}, the boundary of $\mathcal{G}_\mu$
relative to $S_\mu$ is given by
\[
\partial\mathcal{G}_\mu
=
\left\{
u\in S_\mu:
aA(u)+bA(u)^2=\frac{3p-6}{2p}P(u)
\right\}.
\]
Every constrained critical point of $J$ on $S_\mu$ belongs to
$\mathcal{G}_\mu$. Indeed, for such a point $u$, we have $C(u)>0$,
since $u\not\equiv0$ and $|x_\perp|^2>0$ almost everywhere.
The Pohozaev identity \eqref{eq:Pohozaev-whole} then gives
\[
aA(u)+bA(u)^2-\frac{3p-6}{2p}P(u)=C(u)>0.
\]\\

We next establish the energy barrier on $\partial G_\mu$
needed for the mountain-pass geometry.
\begin{lemma}\label{lem:MP-barrier}
There exists $\mu_2\in(0,\mu_1]$ such that, for every $\mu\in(0,\mu_2)$, $\mathcal{G}_\mu\neq\varnothing$, $\partial\mathcal{G}_\mu\neq\varnothing$, and
\begin{equation*}\nu_\mu:=\inf_{u\in\partial\mathcal{G}_\mu}J(u)>d_\mu.
\end{equation*}
\end{lemma}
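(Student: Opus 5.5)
Non-emptiness comes from fibering along the mass-preserving dilation; the barrier comes from combining the boundary relation with Gagliardo--Nirenberg, which forces $A(u)$ to be large on $\partial\mathcal G_\mu$.

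\emph{Non-emptiness.} Fix $u\in S_\mu$ and look at the fiber $t\mapsto t\star u$. By \eqref{eq:scaling-identities},
\[
aA(t\star u)+bA(t\star u)^2-\tfrac{3p-6}{2p}P(t\star u)=at^2A(u)+bt^4A(u)^2-\tfrac{3p-6}{2p}t^{\frac{3p-6}{2}}P(u).
\]
For small $t$ this expression is positive. Since $\frac{3p-6}{2}>4$, it is negative for large $t$. So $t\star u$ lies in $\mathcal G_\mu$ for small $t$, and by continuity it crosses $\partial\mathcal G_\mu$ at some $t>0$. Hence both sets are nonempty for every $\mu>0$.

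\emph{Lower bound on $A$ along the boundary.} Let $u\in\partial\mathcal G_\mu$. Combining the boundary relation with \eqref{eq:GN} gives
\[
bA(u)^2\le aA(u)+bA(u)^2=\tfrac{3p-6}{2p}P(u)\le \tfrac{3p-6}{2p}C_p\mu^{\frac{6-p}{4}}A(u)^{\frac{3p-6}{4}}.
\]
Because $\frac{3p-6}{4}>2$, this yields
\[
A(u)\ge \kappa_\mu:=\Big(\tfrac{2pb}{(3p-6)C_p}\Big)^{\frac{4}{3p-14}}\mu^{-\frac{6-p}{3p-14}},
\]
and $\kappa_\mu\to\infty$ as $\mu\to0^+$.

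\emph{Energy on the boundary.} Substituting $P(u)=\frac{2p}{3p-6}(aA+bA^2)$ into $J$ gives
\[
J(u)=\Big(\tfrac a2-\tfrac{2a}{3p-6}\Big)A(u)+\Big(\tfrac b4-\tfrac{2b}{3p-6}\Big)A(u)^2+\tfrac12C(u).
\]
Both coefficients are positive: $\frac{2}{3p-6}<\frac14$ because $3p-6>8$, i.e. $p>14/3$. Therefore $J(u)\ge \beta A(u)^2\ge\beta\kappa_\mu^2$ with $\beta:=\frac b4-\frac{2b}{3p-6}>0$, and this bound is uniform over $\partial\mathcal G_\mu$. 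So $\nu_\mu\ge\beta\kappa_\mu^2\to+\infty$ as $\mu\to0^+$. On the other hand, Lemma~\ref{lem:vanishing-level} gives $d_\mu\sim\sqrt a\,\mu\to0$. Hence there is $\mu_2\in(0,\mu_1]$ such that $\beta\kappa_\mu^2>d_\mu$ for all $\mu<\mu_2$, which gives $\nu_\mu>d_\mu$.

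The only delicate point is the sign of the $A^2$ coefficient after substitution, and this is exactly where $p>14/3$ is needed. Everything else is explicit. If one prefers not to track constants, note that $d_\mu\le f_\mu(1)$ is bounded for $\mu\le1$ while $\nu_\mu\to\infty$, so a small enough $\mu_2$ works.
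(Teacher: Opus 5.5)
Your proof is correct and follows the paper's argument: fibering along $t\star u$ gives non-emptiness, Gagliardo--Nirenberg combined with the boundary identity forces $A(u)\to\infty$ on $\partial\mathcal G_\mu$, and the substituted form of $J$ (with the same coefficients $\frac{a(3p-10)}{2(3p-6)}$ and $\frac{b(3p-14)}{4(3p-6)}$) gives $\nu_\mu\to+\infty$ while $d_\mu\to0$. The differences are cosmetic: you obtain $\mathcal G_\mu\neq\varnothing$ by small-$t$ dilation of an arbitrary $u\in S_\mu$ rather than from the test family $\psi_{\mu,t}$, and you get the lower bound on $A(u)$ from the quartic term $bA(u)^2$ and keep $\beta A(u)^2$, whereas the paper uses $aA(u)$ (which needs only $p>\frac{10}{3}$ at that step) and keeps the linear term.
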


\begin{proof}
For $\psi_{\mu,t}$ defined in \eqref{eq:test-family}, we have
\[
A(\psi_{\mu,t})\to s_\mu,\quad P(\psi_{\mu,t})\to0\quad\text{as }t\to0^+.
\]
Since $as_\mu+bs_\mu^2>0$, one can show that $\psi_{\mu,t}\in\mathcal G_\mu$ for all sufficiently small $t>0$. Thus $\mathcal G_\mu\neq\varnothing$.
Let $w\in\mathcal G_\mu$. Using \eqref{eq:scaling-identities}, we obtain
\[
aA(t\star w)+bA(t\star w)^2-\frac{3p-6}{2p}P(t\star w)=a t^2A(w)+b t^4A(w)^2-\frac{3p-6}{2p}t^{\frac{3p-6}{2}}P(w).
\]
Since $\frac{3p-6}{2}>4$, the right-hand side is negative for all sufficiently large $t$. By continuity, we can show that $t\mapsto t\star w$ intersects $\partial\mathcal G_\mu$. Therefore, $\partial\mathcal G_\mu\neq\varnothing$.

Let $u\in\partial\mathcal G_\mu$. Then, we have
\begin{equation}\label{eq:G-boundary-identity}
aA(u)+bA(u)^2=\frac{3p-6}{2p}P(u).
\end{equation}
Combining \eqref{eq:GN} with \eqref{eq:G-boundary-identity}, it follows that
\[
aA(u)\leq\frac{3p-6}{2p}C_p\mu^{\frac{6-p}{4}}A(u)^{\frac{3p-6}{4}},
\]
and hence
\[
A(u)\geq\left(\frac{2ap}{(3p-6)C_p}\right)^{\frac{4}{3p-10}}\mu^{\frac{p-6}{3p-10}}.
\]
Since $p>\frac{14}{3}$, it follows from \eqref{eq:G-boundary-identity} that, for every $u\in\partial\mathcal G_\mu$
\[
\begin{aligned}
J(u)&=\frac{a(3p-10)}{2(3p-6)}A(u)+\frac{b(3p-14)}{4(3p-6)}A(u)^2+\frac{1}{2}C(u)\geq\frac{a(3p-10)}{2(3p-6)}A(u)\\
&\geq\frac{a(3p-10)}{2(3p-6)}\left(\frac{2ap}{(3p-6)C_p}\right)^{\frac{4}{3p-10}}\mu^{\frac{p-6}{3p-10}}\to+\infty
\end{aligned}
\]
as $\mu\to0^+$. Hence, $\nu_\mu\to+\infty$ as $\mu\to0^+$. In view of \eqref{eq:s-d-asymptotics}, we have $d_\mu\to0$. Consequently, there exists $\mu_2\in(0,\mu_1]$ such that $\nu_\mu>d_\mu$ for every $\mu\in(0,\mu_2)$.
\end{proof}

The following lemma establishes the mountain-pass geometry of $J$ on $S_\mu$.
\begin{lemma}\label{lem:MP-geometry}
Let $\mu\in(0,\mu_2)$. There exist nonnegative functions $w_0\in\mathcal{G}_\mu$, $w_1\in S_\mu\setminus\mathcal{G}_\mu$, both compactly supported in a ball $B_{R_0}$ for some $R_0>0$, such that
\begin{equation*}
m_\mu:=\inf_{\gamma\in\Gamma}\max_{t\in[0,1]}J(\gamma(t))\geq\nu_\mu>d_\mu>\max\{J(w_0),J(w_1)\},
\end{equation*}
where
\[
\Gamma:=\left\{\gamma\in C([0,1],S_\mu):\gamma(0)=w_0,\ \gamma(1)=w_1\right\}.
\]
\end{lemma}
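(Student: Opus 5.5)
We take $w_0:=\psi_{\mu,t_0}$ from \eqref{eq:test-family} and $w_1:=\tau\star w_0$ for a large dilation parameter $\tau$. Both are nonnegative and lie in $S_\mu$. Since $\eta\in C_c^\infty(\mathbb R)$ while $\Psi_{\sigma_\mu}$ is a Gaussian in $x_\perp$, $w_0$ is not compactly supported. We therefore first replace $\Psi_{\sigma_\mu}$ by a cutoff version $\Psi_{\sigma_\mu}\phi(x_\perp/L)$, renormalized in $L^2(\mathbb R^2)$. The resulting test functions have $A$, $C$, $P$ within any prescribed $\varepsilon$ of the quantities in \eqref{eq:test-A}--\eqref{eq:test-P}, uniformly for $t$ in a compact range, by dominated convergence as $L\to\infty$. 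We keep the notation $\psi_{\mu,t}$ for these modified functions.

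\emph{Choice of $w_0$.} By the proof of Lemma~\ref{lem:vanishing-level}, $J_0(\psi_{\mu,t})\to d_\mu$ as $t\to0^+$. By \eqref{eq:test-P}, $P(\psi_{\mu,t})\to0$, and the computation in Lemma~\ref{lem:below-threshold} gives $J(\psi_{\mu,t})<d_\mu$ for small $t$. As in Lemma~\ref{lem:MP-barrier}, $\psi_{\mu,t}\in\mathcal G_\mu$ for small $t$, because $A(\psi_{\mu,t})\to s_\mu>0$ while $P(\psi_{\mu,t})\to0$. We fix such a $t_0$ and then $L$ large enough that these strict inequalities persist after the cutoff; this is possible since the inequalities are strict and the quantities depend continuously on the cutoff. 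This gives $w_0\in\mathcal G_\mu$, compactly supported, with $J(w_0)<d_\mu$.

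\emph{Choice of $w_1$.} We set $w_1:=\tau\star w_0$. Its support is the support of $w_0$ scaled by $1/\tau$, so for $\tau\ge1$ it stays in the same ball $B_{R_0}$. By \eqref{eq:scaling-identities},
\[
J(\tau\star w_0)=\frac a2\tau^2A(w_0)+\frac b4\tau^4A(w_0)^2+\frac12\tau^{-2}C(w_0)-\frac1p\tau^{\frac{3p-6}{2}}P(w_0).
\]
Since $\frac{3p-6}{2}>4$ and $P(w_0)>0$, this tends to $-\infty$ as $\tau\to\infty$. The quantity $aA+bA^2-\frac{3p-6}{2p}P$ evaluated at $\tau\star w_0$ is also eventually negative. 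We choose $\tau$ large so that both hold. Then $w_1\notin\mathcal G_\mu$ and $J(w_1)<0<d_\mu$.

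\emph{Lower bound $m_\mu\ge\nu_\mu$.} Let $\gamma\in\Gamma$. The map $t\mapsto aA(\gamma(t))+bA(\gamma(t))^2-\frac{3p-6}{2p}P(\gamma(t))$ is continuous on $[0,1]$, positive at $t=0$ and negative at $t=1$. By the intermediate value theorem there is $t^*$ with $\gamma(t^*)\in\partial\mathcal G_\mu$, so $\max_{t\in[0,1]} J(\gamma(t))\ge J(\gamma(t^*))\ge\nu_\mu$. Combined with Lemma~\ref{lem:MP-barrier}, which gives $\nu_\mu>d_\mu$, this yields the chain of inequalities in the statement.

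The main obstacle is the compact support requirement. It forces the cutoff of the Gaussian profile, and we must check that the strict inequality $J(w_0)<d_\mu$ and the membership $w_0\in\mathcal G_\mu$ survive it. This follows from the strictness of both conditions and the continuity of $A$, $C$, $P$ in $H$, since the cutoff functions converge to $\psi_{\mu,t_0}$ in $H$. The $L^2$ renormalization factor tends to $1$, so it does not affect this convergence.
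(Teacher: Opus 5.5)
Your proof is correct and follows essentially the paper's route: a small-$t$ test function $\psi_{\mu,t}\in\mathcal G_\mu$ with $J<d_\mu$ is cut off and renormalized, then $w_1=\tau\star w_0$ for large $\tau$, and $m_\mu\ge\nu_\mu$ follows because every path in $\Gamma$ must cross $\partial\mathcal G_\mu$. The paper cuts off with $\vartheta(x/R)$ on all of $\mathbb R^3$, whereas your transverse-only cutoff suffices since $\eta$ is already compactly supported; the paper also exhibits the dilation path $s\mapsto(1+s(\tau-1))\star w_0$ to show $\Gamma\neq\varnothing$, which you should add so that $m_\mu$ is a finite minimax level rather than $+\infty$.
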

\begin{proof}
As shown in the proof of Lemma~\ref{lem:below-threshold}, $J(\psi_{\mu,t})<d_\mu$ for all sufficiently small $t>0$. Moreover, using \eqref{eq:test-A} and \eqref{eq:test-P}, we get
\[
aA(\psi_{\mu,t})+bA(\psi_{\mu,t})^2-\frac{3p-6}{2p}P(\psi_{\mu,t})\to as_\mu+bs_\mu^2>0
\]
as $t\to0^+$. We may therefore choose $t_\mu>0$ such that $\psi_{\mu,t_\mu}\in\mathcal G_\mu$ satisfying $J(\psi_{\mu,t_\mu})<d_\mu$.

Choose $\vartheta\in C_c^\infty(\mathbb R^3,[0,1])$ such that $\vartheta=1$ on $B_1$ and
$\operatorname{supp}\vartheta\subset B_2$, and set 
\[\vartheta_R(x):=\vartheta(\frac{x}R), \quad w_R:=\frac{\sqrt\mu\,\vartheta_R\psi_{\mu,t_\mu}}{\|\vartheta_R\psi_{\mu,t_\mu}\|_2}.\]
Since
$\vartheta_R\psi_{\mu,t_\mu}\to\psi_{\mu,t_\mu}$ in $H$ as $R\to\infty$, it follows that $w_R\to\psi_{\mu,t_\mu}$ in $H$.
Hence, for all sufficiently large $R$, we have $w_R\in\mathcal G_\mu$ and $J(w_R)<d_\mu$.
Fix such an $R$, set $w_0:=w_R$, and choose $R_0>0$ such that $\operatorname{supp}w_0\subset B_{R_0}$.

For $t\geq1$, we have $t\star w_0\in S_\mu$ and $\operatorname{supp}(t\star w_0)\subset B_{R_0}$. Since $\frac{3p-6}{2}>4$, using \eqref{eq:scaling-identities}, we obtain
\begin{equation*}
J(t\star w_0)=\frac{a}{2}t^2A(w_0)+\frac{b}{4}t^4A(w_0)^2+\frac{1}{2}t^{-2}C(w_0)
-\frac{1}{p}t^{\frac{3p-6}{2}}P(w_0)\to-\infty,
\end{equation*}
and
\[
aA(t\star w_0)+bA(t\star w_0)^2-\frac{3p-6}{2p}P(t\star w_0)\to-\infty
\]
as $t\to\infty$. Thus, for some $t_0>1$, $w_1:=t_0\star w_0$ satisfies $w_1\in S_\mu\setminus\mathcal G_\mu$ and $J(w_1)<d_\mu$. 

The path
\[
\gamma_0(s):=(1+s(t_0-1))\star w_0,\quad s\in[0,1],
\]
belongs to $\Gamma$ and is supported in $B_{R_0}$, so $\Gamma\neq\varnothing$. Since $w_0\in\mathcal G_\mu$ and $w_1\notin\mathcal G_\mu$, every path $\gamma\in\Gamma$ intersects $\partial\mathcal G_\mu$. Therefore, we can obtain
$\max\limits_{s\in[0,1]}J(\gamma(s))\geq\nu_\mu$ for every $\gamma\in\Gamma$,
and thus $m_\mu\geq\nu_\mu$. By Lemma~\ref{lem:MP-barrier} and the construction of $w_0,w_1$, we have
\[
m_\mu\geq\nu_\mu>d_\mu>\max\{J(w_0),J(w_1)\}.
\]
\end{proof}

We next construct positive mountain-pass solutions on balls
containing the supports of $w_0$ and $w_1$.
For $R>R_0$, consider the normalized Dirichlet problem
\begin{equation}\label{eq:bounded-problem}
\begin{cases}
-\left(a+b\displaystyle\int_{B_R}|\nabla u|^2\,dx\right)\Delta u+|x_\perp|^2u+\lambda u=|u|^{p-2}u
&\text{in }B_R,\\
u=0&\text{on }\partial B_R,\\\displaystyle\int_{B_R}|u|^2\,dx=\mu.
\end{cases}
\end{equation}
For each fixed $R$, bounded constrained Palais--Smale
sequences are precompact. The main task is therefore
to obtain a bounded sequence at the mountain-pass level. 
Following \cite{Wang-2024}, we achieve this via a minimax argument that exploits the monotonicity trick.

Define the mass constraint
\[
S_{\mu,R}:=\left\{u\in H_0^1(B_R):\int_{B_R}|u|^2\,dx=\mu\right\}.
\]
For $u\in S_{\mu,R}$, set
\[
T_uS_{\mu,R}:=
\left\{\varphi\in H_0^1(B_R):\int_{B_R}u\varphi\,dx=0\right\}.
\]
For $\rho\in[\frac12,1]$, define $J_{R,\rho}:H_0^1(B_R)\to\mathbb R$ by
\[
J_{R,\rho}(u):=\frac{a}{2}\int_{B_R}|\nabla u|^2\,dx+\frac{b}{4}
\left(\int_{B_R}|\nabla u|^2\,dx\right)^2+\frac{1}{2}\int_{B_R}|x_\perp|^2|u|^2\,dx-\frac{\rho}{p}\int_{B_R}|u|^p\,dx.
\]

Since $w_0$ and $w_1$ are supported in $B_{R_0}$, they belong to $S_{\mu,R}$ for every $R>R_0$. Define
\[
\Gamma_R:=\left\{\gamma\in C([0,1],S_{\mu,R}):\gamma(0)=w_0,\ \gamma(1)=w_1\right\}.
\]
The compactly supported path constructed in the proof of Lemma~\ref{lem:MP-geometry} belongs to $\Gamma_R$, so $\Gamma_R\neq\varnothing$. We then consider the associated constrained minimax problem
\[
m_{\mu,R,\rho}:=
\inf_{\gamma\in\Gamma_R}\max_{t\in[0,1]}J_{R,\rho}(\gamma(t)).
\]
When $\rho=1$, we write
\[J_R:=J_{R,1},\quad m_{\mu,R}:=m_{\mu,R,1}.\]\\

In what follows, we show that the mountain-pass geometry remains valid for $J_{R,\rho}$
when $\rho$ is sufficiently close to $1$, uniformly for $R>R_0$.
\begin{lemma}\label{lem:uniform-bounded-geometry}
There exists $\varepsilon_0=\varepsilon_0(\mu)\in\left(0,\frac{1}{2}\right)$ such that, for every $R>R_0$ and every $\rho\in[1-\varepsilon_0,1]$,
\begin{equation*}
m_{\mu,R,\rho}\geq\nu_\mu>\max\{J_{R,\rho}(w_0),J_{R,\rho}(w_1)\}.
\end{equation*}
\end{lemma}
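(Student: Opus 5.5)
The plan is to treat the two inequalities separately. For the endpoint values, I will use only the fact that $w_0,w_1$ are fixed functions supported in $B_{R_0}$. For the minimax bound, I will rerun the barrier argument of Lemma~\ref{lem:MP-barrier} with $\rho$ inserted, which forces a perturbed boundary set.

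\textbf{Endpoints.} Since $w_0$ and $w_1$ are supported in $B_{R_0}\subset B_R$, the value $J_{R,\rho}(w_i)$ does not depend on $R$. It equals
\[
J(w_i)+\frac{1-\rho}{p}P(w_i)\le J(w_i)+\frac{\varepsilon_0}{p}P(w_i).
\]
By Lemma~\ref{lem:MP-geometry}, $\max\{J(w_0),J(w_1)\}<d_\mu<\nu_\mu$. So it is enough to choose
\[
\varepsilon_0<\frac{p\,(\nu_\mu-\max_i J(w_i))}{\max_i P(w_i)+1}.
\]
This choice depends only on $\mu$ and gives $\max\{J_{R,\rho}(w_0),J_{R,\rho}(w_1)\}<\nu_\mu$.

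\textbf{Lower bound for $m_{\mu,R,\rho}$.} First extend each $u\in H_0^1(B_R)$ by zero, so that $S_{\mu,R}\subset S_\mu$ and every $\gamma\in\Gamma_R$ belongs to $\Gamma$. Since $\rho\le1$, we have $J_{R,\rho}\ge J$, hence
\[
m_{\mu,R,\rho}\ge m_{\mu,R,1}\ge m_\mu\ge\nu_\mu .
\]
This looks immediate, but it is the point I would double-check, because the monotonicity in $\rho$ goes in the favourable direction. It also appears to give a strict inequality against the endpoint values once the first step is done.

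\textbf{Main obstacle and fallback.} The only real care needed is the uniformity in $R$ and $\rho$. The endpoint step is uniform because the supports are fixed, and the minimax step is uniform because of the inclusion $\Gamma_R\subset\Gamma$. If the intended statement requires a barrier strictly adapted to $J_{R,\rho}$, I would use the set
\[
\partial\mathcal G^\rho_\mu=\Bigl\{u\in S_\mu:\ aA(u)+bA(u)^2=\rho\,\tfrac{3p-6}{2p}P(u)\Bigr\}.
\]
I would then repeat the lower bound from Lemma~\ref{lem:MP-barrier} with $C_p$ replaced by $\rho C_p$, noting that $\rho\ge\tfrac12$ keeps the constants uniform. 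The expected obstacle is therefore only confirming that $\varepsilon_0$ can be chosen independently of $R$, and the support property of $w_0,w_1$ handles this.
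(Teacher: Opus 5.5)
Your proposal is correct and follows essentially the same route as the paper. The endpoint bound uses $J_{R,\rho}(w_i)=J(w_i)+\frac{1-\rho}{p}P(w_i)$, with $\varepsilon_0$ chosen from the gap $\nu_\mu-\max\{J(w_0),J(w_1)\}>0$ and independent of $R$ because $\operatorname{supp}w_i\subset B_{R_0}$; the minimax bound uses $J_{R,\rho}\ge J$ together with the fact that zero-extended paths in $\Gamma_R$ lie in $\Gamma$ and hence cross $\partial\mathcal{G}_\mu$, which is exactly what your chain $m_{\mu,R,\rho}\ge m_{\mu,R}\ge m_\mu\ge\nu_\mu$ encodes. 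The $\rho$-adapted barrier fallback is unnecessary, and you should also cap $\varepsilon_0$ below $\frac12$ to match the statement.
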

\begin{proof}
Fix $\gamma\in\Gamma_R$ and extend each $\gamma(t)$ by zero outside $B_R$. Since $w_0$ and $w_1$ are supported in $B_{R_0}\subset B_R$, the extended path belongs to $\Gamma$. Hence it intersects
$\partial\mathcal G_\mu$, and consequently $\max\limits_{t\in[0,1]}J(\gamma(t))\geq\nu_\mu$.
Since $\rho\leq1$, we obtain
\[
J_{R,\rho}(\gamma(t))=
J(\gamma(t))+\frac{1-\rho}{p}P(\gamma(t))\geq J(\gamma(t))\quad\text{for every }t\in[0,1].
\]
Thus, it follows that $\max\limits_{t\in[0,1]}J_{R,\rho}(\gamma(t))\geq\nu_\mu$, which implies $m_{\mu,R,\rho}\geq\nu_\mu$.

Set $\delta_0:=\nu_\mu-\max\{J(w_0),J(w_1)\}>0$ and choose $\varepsilon_0\in(0,\frac12)$ such that
\[
\frac{\varepsilon_0}{p}\max\{P(w_0),P(w_1)\}<\frac{\delta_0}{2}.
\]
Since $w_0$ and $w_1$ are supported in $B_{R_0}$, this choice is independent of $R$. For every $R>R_0$, $\rho\in[1-\varepsilon_0,1]$, and $i\in\{0,1\}$, we get
\[
J_{R,\rho}(w_i)=J(w_i)+\frac{1-\rho}{p}P(w_i)<\max\{J(w_0),J(w_1)\}+\frac{\delta_0}{2}<\nu_\mu.
\]
\end{proof}
To extract a critical point from this minimax construction, we use the following compactness property, whose proof is similar to \cite[Lemma 4]{Wang-2024}.

\begin{lemma}\label{lem:ball-converge}
Let $R>R_0$ and $\rho\in[1-\varepsilon_0,1]$. Then every bounded Palais--Smale sequence for $J_{R,\rho}|_{S_{\mu,R}}$ admits a subsequence converging strongly in $H_0^1(B_R)$.
\end{lemma}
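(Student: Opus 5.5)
Let $\{u_n\}\subset S_{\mu,R}$ be a bounded Palais--Smale sequence for $J_{R,\rho}|_{S_{\mu,R}}$. We will use the compactness of $H_0^1(B_R)\hookrightarrow L^q(B_R)$ for $q\in[2,6)$, and we will fix the Kirchhoff coefficient at the limit of the kinetic energy, as in the whole-space argument. Since the sequence is bounded, we pass to a subsequence with
\[
u_n\rightharpoonup u\ \text{in }H_0^1(B_R),\qquad u_n\to u\ \text{in }L^2(B_R)\cap L^p(B_R),\qquad \int_{B_R}|\nabla u_n|^2\,dx\to K\ge0 .
\]
Strong $L^2$ convergence gives $\|u\|_2^2=\mu$, so $u\in S_{\mu,R}$. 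The weight $|x_\perp|^2$ is bounded on $B_R$, so $\int_{B_R}|x_\perp|^2u_n^2\,dx\to\int_{B_R}|x_\perp|^2u^2\,dx$.

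The constrained Palais--Smale condition gives multipliers
\[
\lambda_n:=-\frac1\mu\langle J_{R,\rho}'(u_n),u_n\rangle
\]
such that $J_{R,\rho}'(u_n)+\lambda_nu_n\to0$ in $H^{-1}(B_R)$. This is the standard consequence of the tangent-space characterization $T_{u_n}S_{\mu,R}$: project a test function $\varphi$ onto the tangent space by subtracting $\frac1\mu\bigl(\int u_n\varphi\bigr)u_n$. Boundedness of $\{u_n\}$ makes $\{\lambda_n\}$ bounded, so $\lambda_n\to\lambda$ along a subsequence. Set $M:=a+bK>0$.

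First we derive the limit equation. Passing to the limit in the relation tested against a fixed $\varphi\in H_0^1(B_R)$ gives
\[
M\int_{B_R}\nabla u\cdot\nabla\varphi\,dx+\int_{B_R}|x_\perp|^2u\varphi\,dx+\lambda\int_{B_R}u\varphi\,dx=\rho\int_{B_R}|u|^{p-2}u\varphi\,dx .
\]
Here we use weak convergence of the gradients, strong convergence in $L^2$ and $L^p$, and the fact that $|u_n|^{p-2}u_n\to|u|^{p-2}u$ in $L^{p/(p-1)}$. We then test the Palais--Smale relation with $u_n$ and the limit equation with $u$. The terms $\int|x_\perp|^2u_n^2$, $\lambda_n\|u_n\|_2^2$ and $\rho\|u_n\|_p^p$ converge to their counterparts, because $u_n-u\to0$ in $L^2$ and $L^p$ while $\|u_n\|_H$ stays bounded. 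Subtracting the two identities gives
\[
(a+bA(u_n))A(u_n)\to M\,A(u),
\]
that is, $MK=MA(u)$. Since $M>0$, we obtain $A(u)=K$.

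Finally, weak convergence in $H_0^1(B_R)$ together with $A(u_n)\to A(u)$ yields $u_n\to u$ strongly in $H_0^1(B_R)$, since this space is Hilbert.

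The only point needing care is the one just handled. The coefficient $a+bA(u_n)$ multiplies $\nabla u_n$, so the weak limit solves an equation with coefficient $M$ given by $\lim A(u_n)$, not by $A(u)$. The norm identity closes the argument only because it is linear in $A(\cdot)$ once $M$ is frozen and $M>0$. Unlike the whole-space case, no spectral gap argument is needed, because on the bounded domain $\lambda\|u_n-u\|_2^2\to0$ automatically.
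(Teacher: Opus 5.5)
Your proof is correct and is essentially the standard argument the paper defers to (it cites Lemma~4 of its reference on Kirchhoff problems on bounded domains rather than writing the proof out). That argument runs as follows: Rellich compactness on $B_R$ gives strong $L^2$ and $L^p$ convergence, which preserves the mass; the multipliers $\lambda_n$ are bounded; and comparing $\langle J_{R,\rho}'(u_n)+\lambda_n u_n,u_n\rangle=0$ with the limit equation tested by $u$, with the Kirchhoff coefficient frozen at $M=a+bK>0$, yields $A(u_n)\to A(u)$ and hence strong convergence. Testing the Palais--Smale relation directly with $u_n-u$ and using $a+bA(u_n)\ge a>0$ would be an equivalent shortcut relying on the same mechanism.
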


For $\rho\in[1-\varepsilon_0,1]$, let $(u,\lambda)\in S_{\mu,R}\times\mathbb R$ be a positive
constrained critical point of $J_{R,\rho}$. Define the augmented Hessian by
\[
\begin{aligned}
\mathcal Q_{u,\lambda,R,\rho}(\varphi)
&:=\left(a+b\int_{B_R}|\nabla u|^2\,dx\right)\int_{B_R}|\nabla\varphi|^2\,dx
+2b\left(\int_{B_R}\nabla u\cdot\nabla\varphi\,dx\right)^2\\
&\quad+\int_{B_R}|x_\perp|^2\varphi^2\,dx+\lambda\int_{B_R}\varphi^2\,dx
-\rho(p-1)\int_{B_R}u^{p-2}\varphi^2\,dx.
\end{aligned}
\]
We denote by $m^{R,\rho}(u)$ the Morse index of the
augmented Hessian on the full space $H_0^1(B_R)$:
\[
m^{R,\rho}(u)
:=
\sup\left\{
\dim E:
\begin{array}{l}
E\subset H_0^1(B_R)\text{ is a linear subspace},\\
\mathcal Q_{u,\lambda,R,\rho}(\varphi)<0
\text{ for every }\varphi\in E\setminus\{0\}
\end{array}
\right\}.
\]
We refer to this as the free Morse index.
The constrained Morse index is defined by restricting
the same quadratic form to $T_uS_{\mu,R}$.
Since $T_uS_{\mu,R}$ has codimension one in $H_0^1(B_R)$,
a constrained Morse index bound of one implies
$m^{R,\rho}(u)\le2$.
We write
$m^R(u):=m^{R,1}(u)$ and
$\mathcal Q_{u,\lambda,R}:=\mathcal Q_{u,\lambda,R,1}$.\\

We now obtain positive mountain-pass solutions on $B_R$,
together with the uniform bound on their free Morse index
needed in the subsequent compactness argument.
\begin{proposition}\label{prop:uR}
Let $\mu\in(0,\mu_2)$ and $R>R_0$. Then, \eqref{eq:bounded-problem} admits a solution $(u_R,\lambda_R)\in S_{\mu,R}\times\mathbb R$ such that
\begin{equation*}
J_R(u_R)=m_{\mu,R},\quad u_R>0\ \text{in }B_R,
\quad m^R(u_R)\leq2.
\end{equation*}
Moreover, the function $u_R$ is radially symmetric in $x_\perp$, even in $x_3$, and nonincreasing with respect to $|x_\perp|$ and $|x_3|$.
\end{proposition}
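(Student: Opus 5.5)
We obtain $(u_R,\lambda_R)$ by combining the monotonicity trick with the constrained minimax theorem of \cite{Borthwick-2024}, following \cite{Wang-2024}, and then pass to $\rho\to1$.

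\textbf{Step 1: solutions for a.e.\ $\rho$.} Fix $R>R_0$. The map $\rho\mapsto m_{\mu,R,\rho}$ is nonincreasing on $[1-\varepsilon_0,1]$, because $J_{R,\rho}$ is nonincreasing in $\rho$ pointwise. Hence it is differentiable for almost every $\rho$. At such a $\rho$, Lemma~\ref{lem:uniform-bounded-geometry} supplies the mountain-pass geometry. The abstract result of \cite{Borthwick-2024} then yields a bounded Palais--Smale sequence for $J_{R,\rho}|_{S_{\mu,R}}$ at level $m_{\mu,R,\rho}$. The bound in $H_0^1(B_R)$ comes from the derivative estimate: for near-optimal paths, $P(\gamma(t))$ is controlled by $-p\,m'_{\mu,R,\rho}$ on the set where $J_{R,\rho}$ is near the maximum, and this gives $A$ bounded through $J_{R,\rho}$. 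The same theorem provides approximate second-order information, namely a constrained Morse index bound of one in the limit. Since $J_{R,\rho}(|u|)=J_{R,\rho}(u)$, we may work with paths of nonnegative functions, so the sequence can be taken almost nonnegative.

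Lemma~\ref{lem:ball-converge} then gives a critical point $u_\rho\ge0$ with multiplier $\lambda_\rho$, energy $m_{\mu,R,\rho}$, and constrained Morse index at most one. The strong maximum principle gives $u_\rho>0$. Because $T_uS_{\mu,R}$ has codimension one, the free Morse index satisfies $m^{R,\rho}(u_\rho)\le2$.

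\textbf{Step 2: $\rho_n\to1$.} This is the main obstacle. Take $\rho_n\uparrow1$ and set $u_n=u_{\rho_n}$. We need a uniform bound on $u_n$ in $H_0^1(B_R)$, and the strategy is a Pohozaev identity on the ball. Test with $u_n$ and with $x\cdot\nabla u_n$; the boundary term has the favorable sign on the star-shaped domain $B_R$. This gives
\[
aA+bA^2-C-\tfrac{3p-6}{2p}\rho_nP\le0 .
\]
Combine this with $J_{R,\rho_n}(u_n)=m_{\mu,R,\rho_n}\le m_{\mu,R,1-\varepsilon_0}$, which is bounded by the max of $J_{R,1-\varepsilon_0}$ along the fixed path $\gamma_0$. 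Eliminating $P$ as in Lemma~\ref{lem:MP-barrier}, using $p>14/3$ and $C\ge0$, we obtain
\[
\frac{a(3p-10)}{2(3p-6)}A+\frac{b(3p-14)}{4(3p-6)}A^2+\Bigl(\frac12+\frac{2}{3p-6}\Bigr)C\le m .
\]
This bounds $A$ and $C$, and then $P$ and $\lambda_n$ are bounded as well.

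With these bounds, $\{u_n\}$ is a bounded Palais--Smale sequence for $J_R|_{S_{\mu,R}}$. Lemma~\ref{lem:ball-converge} applies, and $u_n\to u_R$ strongly, with $\lambda_n\to\lambda_R$. The left-continuity of $\rho\mapsto m_{\mu,R,\rho}$ at $\rho=1$ gives $J_R(u_R)=m_{\mu,R}$; this follows from monotonicity together with the usual argument via near-optimal paths. Positivity of $u_R$ follows from $u_R\ge0$, $u_R\neq0$ and the strong maximum principle. Strong convergence makes the quadratic forms $\mathcal Q_{u_n,\lambda_n,R,\rho_n}$ converge to $\mathcal Q_{u_R,\lambda_R,R}$ on finite-dimensional subspaces. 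Since the free Morse index is lower semicontinuous under this convergence, $m^R(u_R)\le2$.

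\textbf{Step 3: symmetry.} Any positive solution of \eqref{eq:bounded-problem} solves a local semilinear equation with the frozen coefficient $M_R=a+bA(u_R)$. The potential $|x_\perp|^2$ is radial and increasing in $|x_\perp|$, and independent of $x_3$. Applying the moving plane method of \cite{Gidas-1979} in every direction orthogonal to the $x_3$-axis, and in the $x_3$-direction, gives the claims. In directions $e\perp e_3$, the potential is monotone along the moving direction in each half, which is the needed sign. The result is that $u_R$ is symmetric and nonincreasing in each such direction, hence radial and nonincreasing in $|x_\perp|$, and even and nonincreasing in $|x_3|$.
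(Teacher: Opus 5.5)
Your proposal follows essentially the same route as the paper. It uses the monotonicity trick with the constrained minimax theorem of \cite{Borthwick-2024} (as in \cite{Wang-2024}) for a.e.\ $\rho$, and the ball Pohozaev identity to get a bound uniform as $\rho_n\to1^-$. It then applies Lemma~\ref{lem:ball-converge} with left-continuity of $\rho\mapsto m_{\mu,R,\rho}$, passes the free Morse index bound to the limit on finite-dimensional subspaces, and uses the moving plane method of \cite{Gidas-1979}. You spell out Step~1, which the paper delegates to the references, and you bound $m_{\mu,R,\rho_n}$ by monotonicity rather than by $m_{\mu,R,\rho_n}\to m_{\mu,R}$; both choices work.

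One sign slip needs fixing. The ball Pohozaev identity reads $aA+bA^2-C=(a+bA)\frac R2\int_{\partial B_R}|\partial_\nu u_n|^2\,d\sigma+\frac{3p-6}{2p}\rho_nP$. The favorable boundary sign therefore gives $aA+bA^2-C-\frac{3p-6}{2p}\rho_nP\ge0$, not $\le0$. Only this direction bounds $\frac{\rho_n}{p}P$ from above and yields the estimate you display.
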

\begin{proof}
By modifying the arguments in \cite{Wang-2024} (see also \cite{CJS-2024, CRZ-2025}) and using Lemmas \ref{lem:uniform-bounded-geometry} and \ref{lem:ball-converge}, we can find a sequence $\{\rho_n\}$ with $\rho_n\to1^{-}$ as $n\to\infty$ and a corresponding sequence of nonnegative mountain-pass critical points $\{u_{R,\rho_n}\}$ of $J_{R,\rho_n}$ on $S_{\mu,R}$ at the level $m_{\mu,R,\rho_n}$, satisfying $m^{R,\rho_n}(u_{R,\rho_n})\leq2$.  Using similar arguments as in \cite[Lemma 2.3]{Jeanjean-1999}, we conclude that $m_{\mu,R,\rho_n}\to m_{\mu,R,1}$ as $\rho_n\to1^-$. Note that $u_{R,\rho_n}$ satisfies the following Pohozaev identity
\begin{align*}
&a\int_{B_R}|\nabla u_{R,\rho_n}|^2\,dx
+b\left(\int_{B_R}|\nabla u_{R,\rho_n}|^2\,dx\right)^2
-\int_{B_R}|x_\perp|^2u_{R,\rho_n}^2\,dx\notag\\
&\quad=\left(a+b\int_{B_R}|\nabla u_{R,\rho_n}|^2\,dx\right)
\frac{R}{2}\int_{\partial B_R}\left|\frac{\partial u_{R,\rho_n}}{\partial\nu}\right|^2\,d\sigma
+\frac{3p-6}{2p}\rho_n\int_{B_R}u_{R,\rho_n}^p\,dx.
\end{align*}
Consequently, we have
\begin{align}
m_{\mu,R,\rho_n}
&\geq\frac{3p-10}{6p-12}a\int_{B_R}|\nabla u_{R,\rho_n}|^2\,dx
+\frac{3p-14}{12p-24}b\left(\int_{B_R}|\nabla u_{R,\rho_n}|^2\,dx\right)^2\nonumber\\
&+\frac{3p-2}{6p-12}\int_{B_R}|x_\perp|^2u_{R,\rho_n}^2\,dx.
\label{eq:un-R-bound}
\end{align}
Estimate (\ref{eq:un-R-bound}), together with $p>\frac{14}3$ and
$m_{\mu,R,\rho_n}\to m_{\mu,R}$, shows that
$\{u_{R,\rho_n}\}$ is bounded in $H_0^1(B_R)$.  Moreover,
\[
J_R(u_{R,\rho_n})
=
m_{\mu,R,\rho_n}
-\frac{1-\rho_n}{p}P(u_{R,\rho_n})
\to m_{\mu,R},
\]
and
\[
J_R'(u_{R,\rho_n})
+\lambda_{R,\rho_n}u_{R,\rho_n}
=
(\rho_n-1)|u_{R,\rho_n}|^{p-2}u_{R,\rho_n}
\to0
\quad\text{in }H^{-1}(B_R).
\]
Hence, $\{u_{R,\rho_n}\}$ is a bounded Palais--Smale
sequence for $J_R|_{S_{\mu,R}}$. From Lemma \ref{lem:ball-converge} and the strong maximum principle, there exists $(u_{R},\lambda_{R})\in S_{\mu,R}\times\R$ with $u_R>0$ satisfying \eqref{eq:bounded-problem}.

We then prove $m^R(u_R)\leq2$. Otherwise, there exists a three-dimensional subspace $E\subset H_0^1(B_R)$ such that
\[
J_R''(u_R)[\varphi,\varphi]+\lambda_R\int_{B_R}\varphi^2\,dx<0\quad
\text{for every }0\neq\varphi\in E.
\]
Since $E$ is finite dimensional, the strong convergence of $u_{R,\rho_n}$, together with $\rho_n\to1$ and $\lambda_{R,\rho_n}\to\lambda_R$, implies that for all sufficiently large $n$,
\[
J_{R,\rho_n}''(u_{R,\rho_n})[\varphi,\varphi]+\lambda_{R,\rho_n}\int_{B_R}\varphi^2\,dx<0
\quad\text{for every }0\neq\varphi\in E,
\]
contradicting $m^{R,\rho_n}(u_{R,\rho_n})\leq2$. Hence $m^R(u_R)\leq2$.

Finally, by standard elliptic regularity, it follows that $u_{R}\in C^{2,\eta}\left(\overline{B_R}\right)$ for some $\eta\in(0,1)$. 
We write \eqref{eq:bounded-problem} as
\[
\Delta u_R+F_R(x,u_R)=0\quad\text{in }B_R,
\qquad
u_R=0\quad\text{on }\partial B_R,
\]
where
\[
F_R(x,s):=
\frac{s^{p-1}-(|x_\perp|^2+\lambda_R)s}{a+bA(u_R)},
\qquad x\in B_R,\ s\geq0.
\]
For each $s\geq0$, $F_R(\cdot,s)$ is radially symmetric in
$x_\perp$, nonincreasing in $|x_\perp|$, and independent of $x_3$. Applying \cite[Corollary~1, p.~227]{Gidas-1979}
in every direction in the $x_\perp$-plane and in the
$x_3$-direction, we conclude that $u_R$ is radially symmetric
in $x_\perp$, even in $x_3$, and strictly decreasing in
each of $|x_\perp|$ and $|x_3|$.
\end{proof}

Before taking the limit of the solutions, we show that, as $R\to+\infty$, the mountain-pass levels on $B_R$ converge to the mountain-pass level in the whole space.
\begin{lemma}\label{lem:mR-to-m}
As $R\to+\infty$, we have $m_{\mu,R}\to m_\mu$.
\end{lemma}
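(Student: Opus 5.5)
We prove $m_{\mu,R}\to m_\mu$ by establishing two inequalities: monotonicity in $R$ together with $m_{\mu,R}\ge m_\mu$, and an approximation argument for the upper bound.

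\emph{Lower bound and monotonicity.} For $R<R'$, extension by zero maps $S_{\mu,R}$ into $S_{\mu,R'}$ and $\Gamma_R$ into $\Gamma_{R'}$, and $J_R=J_{R'}$ on the extended functions. Hence $R\mapsto m_{\mu,R}$ is nonincreasing.

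Likewise, extending by zero embeds $\Gamma_R$ into $\Gamma$ with $J_R(\gamma(t))=J(\gamma(t))$. This gives $m_{\mu,R}\ge m_\mu$ for every $R>R_0$, so $\lim_{R\to\infty}m_{\mu,R}$ exists and is at least $m_\mu$.

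\emph{Upper bound.} Fix $\varepsilon>0$ and choose $\gamma\in\Gamma$ with $\max_{t}J(\gamma(t))<m_\mu+\varepsilon$. Let $\vartheta_R$ be the cutoff from the proof of Lemma~\ref{lem:MP-geometry}, and define
\[
\gamma_R(t):=\frac{\sqrt\mu\,\vartheta_{R/2}\gamma(t)}{\|\vartheta_{R/2}\gamma(t)\|_2}.
\]
Then $\gamma_R(t)$ is supported in $B_R$ and has mass $\mu$.

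Since $[0,1]$ is compact, $\gamma([0,1])$ is a compact subset of $H$. Multiplication by $\vartheta_{R/2}$ converges to the identity strongly in $H$ and is uniformly bounded as an operator. Therefore $\vartheta_{R/2}\gamma(t)\to\gamma(t)$ in $H$ uniformly in $t\in[0,1]$; this follows from equicontinuity on compact sets, using the bound $|\nabla\vartheta_{R/2}|\le C/R$ and the tails of the $H$-norm being uniformly small on a compact set. The normalizing factors then tend to $1$ uniformly in $t$, because $\|\gamma(t)\|_2^2=\mu$.

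Because $J$ is continuous on $H$, hence uniformly continuous on a neighborhood of the compact set $\gamma([0,1])$, we get
\[
\max_t J(\gamma_R(t))\to\max_t J(\gamma(t)).
\]

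\emph{Endpoints.} For $R/2>R_0$ we have $\vartheta_{R/2}=1$ on $\operatorname{supp} w_i$, so $\gamma_R(0)=w_0$ and $\gamma_R(1)=w_1$. Hence $\gamma_R\in\Gamma_R$, and
\[
m_{\mu,R}\le\max_tJ_R(\gamma_R(t))=\max_tJ(\gamma_R(t))<m_\mu+2\varepsilon
\]
for all large $R$. Together with the lower bound, this proves the lemma.

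\emph{Main obstacle.} The main technical point is the uniformity in $t$ of the cutoff approximation in the $H$-norm, including the weighted term $C$. We handle it through compactness of $\gamma([0,1])$. For any $u$ in this compact set, $\|(1-\vartheta_{R/2})u\|_H$ is bounded by the $H$-tail of $u$ outside $B_{R/2}$ plus $C\|u\|_2/R$. The tails are uniformly small on a compact set, by a finite-net argument.
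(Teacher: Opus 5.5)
Your proposal is correct and follows essentially the same route as the paper. In both, zero extension gives $\Gamma_R\subset\Gamma_{R'}\subset\Gamma$, hence monotonicity and $m_{\mu,R}\ge m_\mu$; for the upper bound, a near-optimal path $\gamma\in\Gamma$ is cut off with $\vartheta(2x/R)$ (the paper's $\xi_R$) and renormalized, compactness of $\gamma([0,1])$ yields uniform convergence in $H$, and the endpoints $w_0,w_1$ are left unchanged once $R/2>R_0$. Your explicit tail bound for $\|(1-\vartheta_{R/2})u\|_H$ is simply a more hands-on version of the paper's appeal to uniformly bounded multiplication operators converging strongly on a compact set.
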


\begin{proof}
If $R_0<R<R'$, zero extension gives $\Gamma_R\subset\Gamma_{R'}\subset\Gamma$. Hence, $m_{\mu,R}\geq m_{\mu,R'}\geq m_\mu$. Thus, $m_{\mu,R}$ is nonincreasing in $R$, and there exists $\ell_\mu\geq m_\mu$ such that $m_{\mu,R}\to\ell_\mu$ as $R\to+\infty$.

It remains to prove $\ell_\mu\leq m_\mu$. Fix $\varepsilon>0$ and choose $\gamma\in\Gamma$ such that
\begin{equation*}
\max_{t\in[0,1]}J(\gamma(t))\leq m_\mu+\frac{\varepsilon}{2}.
\end{equation*}
Let $K:=\gamma([0,1])$, which is compact in $H$. Choose the same cutoff $\vartheta$ as in Lemma~\ref{lem:MP-geometry} and define
\begin{equation*}
\xi_R(x):=\vartheta\left(\frac{2x}{R}\right).
\end{equation*}
Then, we have
\begin{equation*}
\xi_R=1\quad\text{on }B_{\frac{R}{2}},
\quad
\operatorname{supp}\xi_R\subset B_R,
\quad
|\nabla\xi_R|\leq\frac{C}{R}.
\end{equation*}
For each fixed $v\in H$,  it follows that
\begin{equation*}
\nabla(\xi_Rv-v)=(\xi_R-1)\nabla v+v\nabla\xi_R.
\end{equation*}
Consequently, $\|\xi_Rv-v\|_H\to0$ as $R\to+\infty$. Since the operators $v\mapsto\xi_Rv$ are uniformly bounded on $H$ and $K$ is compact, we obtain $\sup\limits_{v\in K}\|\xi_Rv-v\|_H\to0$ as $R\to+\infty$.
For $v\in K$, we define
\begin{equation*}
\Pi_R(v):=\frac{\sqrt{\mu}\,\xi_Rv}{\|\xi_Rv\|_2}.
\end{equation*}
With $K\subset S_\mu$, $\Pi_R$ is well defined for all large $R$ and
$\sup\limits_{v\in K}\|\Pi_R(v)-v\|_H\to0$ as $R\to+\infty$.

Since $w_0$ and $w_1$ are compactly supported, for all sufficiently large $R$, we have $\Pi_R(w_0)=w_0$ and $\Pi_R(w_1)=w_1$. Therefore, $\Pi_R\circ\gamma\in\Gamma_R$. Extending $\Pi_R\circ\gamma(t)$ by zero outside $B_R$, it follows that $J_R(\Pi_R\circ\gamma(t))=J(\Pi_R\circ\gamma(t))$.
Thus, for all sufficiently large $R$, 
\[
m_{\mu,R}\leq\max_{t\in[0,1]}J_R(\Pi_R\circ\gamma(t))=\max_{t\in[0,1]}J(\Pi_R\circ\gamma(t))
\leq\max_{t\in[0,1]}J(\gamma(t))+\frac{\varepsilon}{2}\leq m_\mu+\varepsilon,
\]
which gives $\ell_\mu\leq m_\mu+\varepsilon$. Since $\varepsilon>0$ is arbitrary and $\ell_\mu\geq m_\mu$, we deduce $\ell_\mu=m_\mu$.
\end{proof}

We now establish the compactness needed to pass to the limit
in the solutions on expanding balls. Under the assumptions
below, vanishing can occur only at the energy level $d_\mu$.
\begin{theorem}\label{thm:moving-edge}
Let $R_n\to+\infty$. For each $n$, let $(u_n,\lambda_n)\in S_{\mu,R_n}\times\R$ be a positive solution of \eqref{eq:bounded-problem} with $R=R_n$. Assume that the zero extensions of $u_n$ are radially symmetric in $x_\perp$, even in $x_3$, and nonincreasing in $|x_3|$, and that
\[\sup_n\|u_n\|_H<+\infty,\quad m^{R_n}(u_n)\leq2,\quad J(u_n)\to\beta.\]
If $\beta\neq d_\mu$, then, after passing to a subsequence, there exists a positive $u\in S_\mu$ such that $u_n\to u$ in $H$. Moreover, $u$ is a constrained critical point of $J$ and $J(u)=\beta$.
\end{theorem}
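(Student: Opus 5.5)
Since $\{u_n\}$ is bounded in $H$, we pass to a subsequence with $u_n\rightharpoonup u$ in $H$, $A(u_n)\to K_\infty$, $C(u_n)\to C_\infty$, $P(u_n)\to P_\infty$. Testing \eqref{eq:bounded-problem} with $u_n$ gives $\lambda_n\mu=P(u_n)-(a+bA(u_n))A(u_n)-C(u_n)$, so $\lambda_n\to\lambda_\infty$ along a further subsequence. Set $M_\infty:=a+bK_\infty$. By Lemma~\ref{partial-symmetry compactness}, $u_n\to u$ in $L^q$ for $2<q<6$, so $P_\infty=P(u)$. Every $\varphi\in C_c^\infty(\mathbb R^3)$ lies in $H_0^1(B_{R_n})$ for large $n$. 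Passing to the limit therefore shows that $u\ge0$ weakly solves
\[
-M_\infty\Delta u+|x_\perp|^2u+\lambda_\infty u=u^{p-1}\quad\text{in }\mathbb R^3 .
\]

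\textbf{Spectral estimate from the Morse index.} Suppose $\lambda_\infty<-2\sqrt{M_\infty}$. Take $\varphi_{j}(x)=\Phi_{M_n}(x_\perp)\eta_t(x_3-z_j)$, $j=1,2,3$, with $M_n:=a+bA(u_n)$, $t$ small, disjoint supports far apart, cut off to lie in $B_{R_n}$. Ignoring the nonnegative term $2b(\int\nabla u_n\cdot\nabla\varphi)^2$ for the moment, and dropping the nonlinear term because it has a favorable sign,
\[
\mathcal Q(\varphi)\le M_nA(\varphi)+C(\varphi)+\lambda_n\|\varphi\|_2^2\approx(2\sqrt{M_n}+\lambda_n+O(t^2))\|\varphi\|_2^2<0 .
\]
The rank-one term $2b(\int\nabla u_n\cdot\nabla\varphi)^2$ can spoil negativity only on a codimension-one subspace. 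So if we use four test functions, $\mathcal Q$ is negative on a three-dimensional subspace, contradicting $m^{R_n}(u_n)\le2$. Hence $\lambda_\infty\ge-2\sqrt{M_\infty}$.

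\textbf{Excluding $u=0$.} Suppose $u=0$. Then $P(u_n)\to0$. The identity for $\lambda_n$, together with \eqref{eq:spectral-ineq}, gives
\[
M_\infty K_\infty+C_\infty=-\lambda_\infty\mu\le2\sqrt{M_\infty}\,\mu\le M_\infty K_\infty+C_\infty,
\]
so equality holds throughout and $C_\infty=M_\infty K_\infty$. Moreover, $M_\infty K_\infty\cdot C_\infty=M_\infty\mu^2$ forces $K_\infty C_\infty=\mu^2$. Combining, $aK_\infty+bK_\infty^2=\mu^2/K_\infty$, which is \eqref{eq:s-mu-equation}, so $K_\infty=s_\mu$. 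Then $\beta=f_\mu(s_\mu)=d_\mu$, a contradiction. Thus $u\ne0$, and by the strong maximum principle $u>0$. Lemma~\ref{lem:strict-edge} then gives the strict gap $\lambda_\infty>-2\sqrt{M_\infty}$.

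\textbf{Strong convergence.} Put $r_n:=u_n-u$. Weak convergence gives Brezis--Lieb splittings of the mass, $A$ and $C$, and $P(r_n)\to0$. Subtracting the limit equation tested with $u$ from the equation for $u_n$ tested with $u_n$ yields
\[
M_\infty A(r_n)+C(r_n)+\lambda_\infty\|r_n\|_2^2=o(1).
\]
By \eqref{eq:spectral-ineq}, $(2\sqrt{M_\infty}+\lambda_\infty)\|r_n\|_2^2\le o(1)$, so $\|r_n\|_2\to0$, and then $A(r_n)+C(r_n)\to0$. Hence $u_n\to u$ in $H$, $\|u\|_2^2=\mu$, and $A(u)=K_\infty$. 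The latter recovers the Kirchhoff coefficient, so $u$ is a constrained critical point with $J(u)=\beta$.

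The main obstacle is the spectral estimate. One must handle the rank-one Kirchhoff term in $\mathcal Q$ and ensure the cut-off test functions fit inside $B_{R_n}$ with errors $o(1)$ uniform in $n$. This likely requires choosing $t=t_n\to0$ slowly relative to $R_n$.
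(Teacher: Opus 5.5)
Your proposal is correct and follows the paper's proof essentially step for step. The steps are: symmetric $L^p$ compactness; the Morse-index bound applied to four disjointly supported test functions, restricted to the kernel of $\nu\mapsto\int\nabla u_n\cdot\nabla\nu$, giving $\lambda_\infty\ge-2\sqrt{M_\infty}$; the equality case of the uncertainty/AM--GM chain, which under vanishing forces $K_\infty=s_\mu$ and $\beta=d_\mu$; and the strict gap from Lemma~\ref{lem:strict-edge}, which kills the remainder. The ``main obstacle'' you flag is not one: as in the paper, fix a compactly supported $\psi$ whose Rayleigh quotient is below $2\sqrt{M_\infty}+\delta$ and a fixed $\kappa_L$. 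The strict margin $\lambda_\infty+2\sqrt{M_\infty}<0$ then absorbs all errors, and the supports lie in $B_{R_n}$ for large $n$, with no need for $t_n\to0$.
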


\begin{proof}
Extend $u_n$ by zero outside $B_{R_n}$.
By the boundedness of $\{u_n\}$ in $H$ and
Lemma~\ref{partial-symmetry compactness}, after passing to a
subsequence, there exists $u\in H$ such that
\[
u_n\rightharpoonup u\quad\text{in }H,
\qquad
u_n\to u\quad\text{in }L^p(\mathbb R^3).
\]
Testing the equation in $B_{R_n}$ with $u_n$ and using
$\|u_n\|_2^2=\mu$, we obtain
\[
\lambda_n\mu
=
P(u_n)-aA(u_n)-bA(u_n)^2-C(u_n).
\]
Hence, $\{\lambda_n\}$ is bounded.
Passing to a further subsequence, we may assume that
\[
\lambda_n\to\lambda_\infty\in\mathbb R,
\qquad
A(u_n)\to K_\infty\geq0.
\]
Set $M_\infty:=a+bK_\infty>0$.

We first use the bound $m^{R_n}(u_n)\le2$ to establish 
the spectral estimate (\ref{eq:intro-spectral}). Since $M_\infty=a+bK_\infty$, this amounts to proving
\[
\lambda_\infty\ge-2\sqrt{M_\infty}.
\]
Suppose, by contradiction, that this inequality fails. Choose $\delta>0$ such that
\begin{equation*}
\lambda_\infty+2\sqrt{M_\infty}+4\delta<0.
\end{equation*}
By Lemma~\ref{lem:spectral-bottom}, we can choose $0\neq\psi\in C_c^\infty(\mathbb R^2)$ such that
\begin{equation*}
M_\infty\int_{\mathbb R^2}|\nabla_\perp\psi|^2\,dx_\perp
+\int_{\mathbb R^2}|x_\perp|^2\psi^2\,dx_\perp
<\left(2\sqrt{M_\infty}+\delta\right)\int_{\mathbb R^2}\psi^2\,dx_\perp.
\end{equation*}
Fix $0\neq\kappa\in C_c^\infty(\R)$ and set
\[
\kappa_L(x_3):=\kappa\left(\frac{x_3}{L}\right).
\]
Since
\[
\frac{\int_\R|\kappa_L'|^2\,dx_3}{\int_\R|\kappa_L|^2\,dx_3}
=\frac1{L^2}\frac{\int_\R|\kappa'|^2\,dx_3}{\int_\R|\kappa|^2\,dx_3}
\to0\quad\text{as }L\to+\infty,
\]
we may choose $L>0$ so large that
\begin{equation*}
M_\infty\int_{\mathbb R}|\kappa_L'|^2\,dx_3
<\delta\int_{\mathbb R}|\kappa_L|^2\,dx_3.
\end{equation*}
Choose four translations $t_1,\ldots,t_4$ sufficiently far apart so that the functions
\[
\nu_j(x_\perp,x_3):=\psi(x_\perp)\kappa_L(x_3-t_j),\quad j=1,\ldots,4,
\]
have pairwise disjoint supports. For large $n$, all of them belong to $H_0^1(B_{R_n})$. Then
\[
\frac{M_\infty\|\nabla\nu_j\|^2_2+\int_{\R^3}|x_\perp|^2\nu_j^2\,dx}
{\|\nu_j\|_2^2}<2\sqrt{M_\infty}+2\delta.
\]
Since $R_n\to+\infty$, $\int_{B_{R_n}}|\nabla u_n|^2\,dx\to K_\infty$, and $\lambda_n\to\lambda_\infty$, for all sufficiently large $n$ we have
$\nu_j\in H_0^1(B_{R_n})$ and
\begin{equation}\label{Q}
\begin{aligned}
&\left(a+b\int_{B_{R_n}}|\nabla u_n|^2\,dx\right)\int_{B_{R_n}}|\nabla\nu_j|^2\,dx
+\int_{B_{R_n}}|x_\perp|^2\nu_j^2\,dx+\lambda_n\int_{B_{R_n}}\nu_j^2\,dx\\
&\quad\leq\left(\lambda_\infty+2\sqrt{M_\infty}+4\delta\right)\int_{B_{R_n}}\nu_j^2\,dx<0,
\quad j=1,\ldots,4.
\end{aligned}
\end{equation}

Let $Y=\operatorname{span}\{\nu_1,\ldots,\nu_4\}$ and define
\[
L_n(\nu):=\int_{B_{R_n}}\nabla u_n\cdot\nabla\nu\,dx,\quad \nu\in Y.
\]
Then we set $W_n:=Y\cap\ker L_n$. Since $L_n|_Y$ has rank at most one, we have $\dim W_n\geq3$. For $\nu\in W_n$, the positive rank-one Kirchhoff term in the augmented Hessian vanishes. Since the supports of the $\nu_j$ are disjoint, using \eqref{Q}, we obtain that for every $0\neq\nu\in W_n$,
\[
\begin{aligned}
\mathcal Q_{u_n,\lambda_n,R_n}(\nu)
&=\left(a+b\int_{B_{R_n}}|\nabla u_n|^2\,dx\right)\int_{B_{R_n}}|\nabla\nu|^2\,dx \\
&\quad+\int_{B_{R_n}}|x_\perp|^2\nu^2\,dx+\lambda_n\int_{B_{R_n}}\nu^2\,dx \\
&\quad-(p-1)\int_{B_{R_n}}u_n^{p-2}\nu^2\,dx<0.
\end{aligned}
\]
Thus $m^{R_n}(u_n)\geq3$, contradicting $m^{R_n}(u_n)\leq2$. This proves $\lambda_\infty\geq-2\sqrt{M_\infty}$.

We next show that $u\not\equiv0$.
Suppose, by contradiction, that $u\equiv0$.
Since $u_n\to u$ in $L^p(\mathbb R^3)$,
we have $P(u_n)\to0$. Passing to a further subsequence, we may assume that $C(u_n)\to C_\infty$. Testing \eqref{eq:bounded-problem} with $u_n$ and passing to the limit gives
\begin{equation*}
M_\infty K_\infty+C_\infty+\lambda_\infty\mu=0.
\end{equation*}
Since $\lambda_\infty\geq-2\sqrt{M_\infty}$, we have
\begin{equation*}
M_\infty K_\infty+C_\infty\leq2\sqrt{M_\infty}\,\mu.
\end{equation*}
Using \eqref{eq:partial-uncertainty}, it follows that $K_\infty C_\infty\geq\mu^2$, and hence
\begin{equation*}
M_\infty K_\infty+C_\infty
\geq2\sqrt{M_\infty K_\infty C_\infty}
\geq2\sqrt{M_\infty}\,\mu.
\end{equation*}
Thus equality holds throughout. In particular, we have $C_\infty=M_\infty K_\infty$ and $K_\infty C_\infty=\mu^2$, and therefore
\[
aK_\infty^2+bK_\infty^3=\mu^2.
\]
Lemma \ref{lem:vanishing-level} gives the uniqueness of $s_\mu$, so $K_\infty=s_\mu$. Since $P(u_n)\to0$, 
\[
\beta=\lim_{n\to\infty}J(u_n)
=aK_\infty+\frac{3b}{4}K_\infty^2=d_\mu,
\]
contrary to $\beta\neq d_\mu$. Hence, $u\not\equiv0$.

Passing to the limit in \eqref{eq:bounded-problem}, we obtain
\begin{equation}\label{eq:moving-limit-equation}
-M_\infty\Delta u+|x_\perp|^2u+\lambda_\infty u=u^{p-1}
\quad\text{in }\mathbb R^3.
\end{equation}
Since $u\geq0$ and $u\not\equiv0$, we deduce from the strong maximum principle that $u>0$. Combining this with Lemma~\ref{lem:strict-edge}, we obtain $\lambda_\infty>-2\sqrt{M_\infty}$. 

Set $w_n=u_n-u$ and $\delta_\infty=2\sqrt{M_\infty}+\lambda_\infty>0$. Testing \eqref{eq:bounded-problem} with \(u_n\) and \eqref{eq:moving-limit-equation} with \(u\), subtracting the latter identity from the former, and using the convergence \(a+bA(u_n)\to M_\infty\) together with the strong \(L^p\) convergence, we obtain
\begin{equation}\label{wn}
M_\infty A(w_n)+C(w_n)+\lambda_\infty\|w_n\|_2^2=o(1).
\end{equation}
By \eqref{eq:spectral-ineq} and \eqref{wn}, we have
\[
 0\leq\delta_\infty\|w_n\|_2^2\le o(1).
\]
Since $\delta_\infty>0$, it follows that $\|w_n\|_2\to0$.
Returning to \eqref{wn}, we obtain
\[
M_\infty A(w_n)+C(w_n)\to0.
\]
 Hence $u_n\to u$ in $H$. In particular, $\|u\|_2^2=\mu$, $K_\infty=A(u)$, and \eqref{eq:moving-limit-equation} is the original Kirchhoff equation. The strong convergence also gives $J(u)=\beta$.
\end{proof}

\begin{proof}[Completion of the proof of Theorem~\ref{thm:main}]
Set $\mu_*:=\mu_2$. Fix $\mu\in(0,\mu_*)$. By Proposition~\ref{prop:local-minimizer}, we can obtain a positive constrained local minimizer $u_1\in S_\mu$ with $J(u_1)=c_\mu<d_\mu$.

Let $R_n\to+\infty$ and $u_n:=u_{R_n}$ be the positive critical points from Proposition~\ref{prop:uR}. Extend $u_n$ by zero outside $B_{R_n}$. From Lemma~\ref{lem:mR-to-m}, it follows that
\begin{equation*}
J(u_n)=m_{\mu,R_n}\to m_\mu.
\end{equation*}
Using the same Pohozaev argument as in \eqref{eq:un-R-bound}, applied directly to $u_n$, with $\rho=1$, we have
\[
J(u_n)\geq\frac{3p-10}{6p-12}aA(u_n)+\frac{3p-14}{12p-24}bA(u_n)^2+\frac{1}{2}C(u_n).
\]
Since $p>\frac{14}{3}$ and $J(u_n)\to m_\mu$, $\{u_n\}$ is bounded in $H$. 
From Proposition~\ref{prop:uR}, we obtain the symmetry assumptions and the bound $m^{R_n}(u_n)\leq2$ required in Theorem~\ref{thm:moving-edge}. By Lemma \ref{lem:MP-geometry}, we obtain $m_\mu>d_\mu$. We then apply Theorem~\ref{thm:moving-edge} with $\beta=m_\mu$, which yields, after passing to a subsequence, a positive $u_2\in S_\mu$ such that $u_n\to u_2$ in $H$ and $J(u_2)=m_\mu$. Hence, we conclude that $u_2$ is a mountain-pass critical point of $J|_{S_\mu}$. Finally,
\begin{equation*}
J(u_1)=c_\mu<d_\mu<m_\mu=J(u_2),
\end{equation*}
so the two solutions are distinct. This completes the proof.
\end{proof}

\section*{Statements and Declarations}

\subsection*{Funding}
This work is partially supported by the NSFC (12471102), the NSF of Jilin Province (20250102004JC), and the Research Project of the Education Department of Jilin Province (JJKH20250296KJ).

\subsection*{Competing interests}
The authors declare that there is no conflict of interest.

\end{document}